\documentclass[twocolumn]{autart}    

\usepackage{comment}
\usepackage{graphicx}
\usepackage{mathptmx}
\usepackage{amsmath}
\usepackage{amssymb}
\usepackage{subfig}
\usepackage[round]{natbib}   

\begin{document}

\begin{frontmatter}
\runtitle{Safe Leader-Follower Density Control via HO-MF-CBFs}  

\title{Safe Leader-Follower Density Control via Higher-Order Mean-Field Control Barrier Functions}

\thanks[footnoteinfo]{This paper was not presented at any IFAC 
meeting. Corresponding author M. di Bernardo. Email: mario.dibernardo@unina.it}

\author[SSM]{Cinzia Tomaselli}\ead{c.tomaselli@ssmeridionale.it},     
\author[SSM,Unina]{Mario di Bernardo}\ead{mario.dibernardo@unina.it}          

\address[SSM]{Modeling and engineering risk and complexity, Scuola Superiore Meridionale, Naples, Italy}                                              
\address[Unina]{Department of Electrical Engineering and Information Technology,
University of Naples Federico II, Naples, Italy}            
          
\begin{keyword}
Control barrier functions and state space constraints;
Infinite-dimensional multi-agent systems and networks;
Multi-agent systems;
Large-scale complex systems;
Large-scale and networked optimization problems.
\end{keyword}

\begin{abstract}                        
Mean-field control barrier functions (MF-CBFs) enforce safety of large-scale multi-agent systems at the level of densities, but existing formulations require the control to act directly on the population to be kept safe. We consider populations that are not directly actuated and are influenced only through other, controlled populations, possibly in the presence of diffusion. We show that if the constrained population lies at directed graph distance $d$ from the actuated one, then, under a nondegeneracy condition, the safety functional has relative degree $d+1$ and the control enters its $(d+1)$-th time derivative affinely. Higher-order safety constraints can therefore still be enforced through a convex quadratic program. We apply the framework to leader-follower density control among dangerous regions, where the follower safety functional has relative degree two and the leader safety functional has relative degree one. We characterize exactly when the two constraints are jointly infeasible and bound the tracking errors induced by the safety correction. Simulations in cluttered environments show that the proposed filter preserves follower safety, whereas enforcing leader safety alone does not.
\end{abstract}

\end{frontmatter}

\section{Introduction}
Leader-follower control coordinates large populations by acting on a small set of leaders, whose dynamics indirectly shape the collective behavior of the followers \citep{cruz1978leader,tanner2004leader,cao2012distributed,notarstefano2011containment}. It arises whenever only a fraction of the agents can be directly controlled, as in traffic regulation by a few connected autonomous vehicles \citep{stern2018dissipation,siri2021freeway}, swarm-based search and rescue \citep{queralta2020collaborative,dorigo2021swarm}, and robotic containment of pollutants \citep{kakalis2008robotic}. A paradigmatic example is shepherding \citep{strombom2014solving,lama2024shepherding,lama2025nonreciprocal}, where a few herders steer a larger population of targets toward a goal region, recently also in cluttered environments, where indirect steering and safety preservation are particularly challenging~\citep{tomaselli_icra2026,punzo2025decentralized}.

Continuum formulations recast these problems in the space of probability distributions~\citep{maffettone2025leader-follower,salzano2026robust}, mitigating the curse of dimensionality and enabling rigorous guarantees on convergence, robustness and performance. However, they focus on convergence to a desired follower distribution and do not address safety constraints at the macroscopic level. This is challenging because the followers are not directly actuated: their safety must be guaranteed by acting only on the leader density.

Control barrier functions (CBFs) represent a powerful tool to enforce safety constraints in real time for nonlinear control systems \citep{ames2016control,ames2019control}. By imposing suitable differential inequalities on a safety function, CBFs provide constraints that make a prescribed safe set forward invariant. This framework has been extended in several directions, including stochastic CBFs (S-CBFs) \citep{clark2021control,wang2021chance,wang2021safety}, which allow safety filters to be imposed on stochastic dynamics, exponential CBFs~\citep{nguyen2016exponential} and high-order CBFs (HO-CBFs) \citep{xiao2021high,breeden2023robust}, which address safety constraints with relative degree larger than one, and adaptive CBFs (A-CBFs) \citep{xiao2021adaptive,parwana2025rate}, which improve feasibility by tuning barrier parameters online when safety requirements become restrictive or potentially conflicting. Higher-order barrier certificates have also been designed for leader-follower networks of finitely many agents, where follower constraints are affected by the leader inputs only through the network interconnections~\citep{sharifi2023higher}.
Related developments have also addressed multi-agent safety through non-smooth barrier functions, enabling Boolean compositions of multiple system-level constraints in multi-robot systems~\citep{glotfelter2017nonsmooth}.

At the population level, mean-field control barrier functions (MF-CBFs) \citep{fung2025mean} have recently been introduced to enforce safety constraints directly on density dynamics. These methods extend the classical CBF framework to mean-field models and provide safety filters for large-scale multi-agent systems. Recent extensions include Banach-space control barrier functions \citep{gao2026banach} which generalize control barrier functions to Banach spaces, and mean-field control barrier functions for stochastic multi-agent systems \citep{tomaselli2026mean}, whose mean-field evolution is described by advection-diffusion equations. In the latter setting, to solve a shepherding problem, leader-follower interactions were considered under the assumption that the follower velocity field could be directly adjusted by the safety filter. However, in many leader-follower systems, followers are not directly actuated: their motion is shaped only indirectly through the controlled leader population. The first-order MF-CBF formulations in \citep{fung2025mean,tomaselli2026mean} are therefore insufficient for this setting, since they require the control input to appear explicitly in the first time derivative of the safety functional.

Concurrently with and independently of the present work, higher-order MF-CBFs have been proposed in \citep{nurbekyan2026higher}. That work defines the relative degree of a mean-field safety functional with respect to control-affine individual-agent dynamics $\dot{\mathbf x}= \mathbf f+g\mathbf u$, provides a general calculus for its higher-order derivatives along the corresponding continuity equation, and applies it to cross- and self-correlation barriers for double-integrator swarms, where position-only constraints have relative degree two. In \citep{nurbekyan2026higher}, the constrained density and the actuated density coincide: relative degree two arises because the control acts on the velocity component of the agents' state, and the dynamics is deterministic. Here, we address a structurally different mechanism, in which the agents to be kept safe belong to a \emph{different, non-actuated} population, whose density obeys advection-diffusion dynamics and is affected by the control only through a nonlocal interaction field generated by the actuated population. This cross-population, indirect actuation is not covered by the single-population framework of \citep{nurbekyan2026higher}, and it raises issues that do not appear there, namely the coexistence of constraints of different relative degree on the leader and follower populations and their possible mutual conflict, as well as the effect of the safety correction on the follower tracking performance. It also differs from the agent-level leader-follower setting of~\citep{sharifi2023higher}: here, barriers are defined on densities, and the relative degree is determined by the directed interaction graph among populations, independently of the number of agents.

Building on the recursive barrier condition of finite-dimensional HO-CBFs \citep{xiao2021high}, recalled here for densities consistently with \citep{nurbekyan2026higher}, the contributions of this paper are as follows.
First, we characterize the relative degree of safety functionals in interconnected mean-field systems through the directed interaction graph: if the constrained population is at graph distance $d$ from the actuated one, the control first enters the $(d+1)$-th time derivative of the safety functional, and does so affinely, so that the higher-order MF-CBF condition is a single affine constraint on the control field.
Second, we specialize this result to leader-follower density control in cluttered environments, where the follower and leader safety functionals have relative degree two and one, respectively, and are enforced through a single convex quadratic program acting on the leader control field.
Third, we give a necessary and sufficient condition for the joint feasibility of the leader and follower safety constraints.
Finally, we bound how the safety correction propagates to the leader and follower tracking errors, and show numerically that enforcing leader safety alone through a first-order MF-CBF does not preserve follower safety, whereas the proposed higher-order filter does.

\section{Mathematical preliminaries}
\label{sec:math_preliminaries}
Throughout the paper, $\nabla$, $\nabla \cdot$, and $\Delta$ denote the spatial gradient, divergence, and Laplacian operators, respectively. For a time-dependent quantity $f$, $\partial_t f$ denotes its partial derivative with respect to time, whereas $\dot f$ denotes its total time derivative. For a vector field $\mathbf f$, $J_{\mathbf x}\mathbf f$ denotes its Jacobian matrix with respect to the variable $\mathbf x$, namely $(J_{\mathbf x}\mathbf f)_{ij}=\partial f_i/\partial x_j$.

The spatial domain $\Omega=[-\Lambda,\Lambda]^2$ is endowed with periodic boundary conditions, i.e.\ it is identified with the flat torus. The norms $\|\cdot\|_1$, $\|\cdot\|_2$, and $\|\cdot\|_\infty$ denote the norms in $\mathcal L^1(\Omega)$, $\mathcal L^2(\Omega)$, and $\mathcal L^\infty(\Omega)$, respectively, and $\|f\|_{W^{1,\infty}}:=\|f\|_\infty+\|\nabla f\|_\infty$. We denote by $\langle f,g\rangle:=\int_\Omega f(\mathbf x)g(\mathbf x)\,\mathrm d\mathbf x$ the $\mathcal L^2(\Omega)$ inner product, with the Euclidean scalar product in the integrand for vector-valued functions, and by $f*g$ the periodic convolution on $\Omega$.

Given a functional $\mathcal F(\rho)$, its first variation (functional derivative) $\partial^f_\rho\mathcal F(\rho):\Omega\to\mathbb R$ is the function such that
\[
\frac{\mathrm d}{\mathrm d\varepsilon}\mathcal F(\rho+\varepsilon\eta)\Big|_{\varepsilon=0}
=
\int_\Omega \partial^f_\rho\mathcal F(\rho)(\mathbf x)\,\eta(\mathbf x)\,\mathrm d\mathbf x
\]
for all admissible perturbations $\eta$ with $\int_\Omega\eta\,\mathrm d\mathbf x=0$, i.e.\ the $\mathcal L^2$ representative of the Fr\'echet derivative of $\mathcal F$~\citep{ilin1982fundamentals}.

A continuous function $\alpha:\mathbb R_{\ge0}\to\mathbb R_{\ge0}$ is of class-$\mathcal K$ if it is strictly increasing and $\alpha(0)=0$; a continuous function $\alpha:\mathbb R\to\mathbb R$ is of extended class-$\mathcal K$ if it is strictly increasing and $\alpha(0)=0$, so that $\alpha(s)<0$ for $s<0$. All such functions are assumed to be locally Lipschitz, so that the comparison lemma~\citep{khalil2002nonlinear} can be invoked.

\emph{Control barrier functions.}
For a system $\dot{\mathbf x}=\mathbf w(t,\mathbf x,\mathbf u)$ and a continuously differentiable function $h$, a control barrier function (CBF) renders the safe set $\mathcal C=\{\mathbf x: h(\mathbf x)\ge0\}$ forward invariant by enforcing $\dot h+\alpha(h)\ge0$ along trajectories, with $\alpha$ of extended class-$\mathcal K$ and $h(\mathbf x(0))\ge0$~\citep{ames2016control}.

At the population level, for a density obeying $\partial_t \rho+ \nabla \cdot (\rho\, \mathbf{w}(t,\mathbf{x},\mathbf{u}))= 0$ and a safe set $\mathcal{C} = \{ \rho : \mathcal H(\rho) \geq 0 \}$, the first-order MF-CBF condition~\citep{fung2025mean} is
$\dot{\mathcal H}(\rho) + \alpha(\mathcal H) \geq 0$, where integration by parts, under boundary conditions ensuring that the boundary term vanishes, yields
\[
\dot{\mathcal H} = \int_\Omega \partial^f_\rho \mathcal{H}(\rho)\, \partial_t \rho\, \mathrm d \mathbf x = \int_\Omega \rho\,\nabla\partial^f_\rho\mathcal{H}(\rho)\cdot\mathbf{w}(t,\mathbf{x},\mathbf{u})\, \mathrm d \mathbf{x}.
\]
In particular, if $\mathcal{H}(\rho(0,\cdot))\geq 0$ and the MF-CBF condition is enforced along trajectories, then $\mathcal{C}$ is forward invariant.

\section{Mathematical modeling and problem statement}
\label{sec:model}
Consider a large swarm composed of two populations of agents, referred to as leaders and followers, evolving in the periodic domain $\Omega=[-\Lambda,\Lambda]^2$.
Assume that both populations are sufficiently large to admit a macroscopic description in terms of densities
$\rho_i:\mathbb{R}_{\ge 0}\times\Omega\to\mathbb{R}_{\ge 0}$ for $i\in\{L,F\}.$
The dynamics is subject to periodic boundary conditions, which ensure conservation of mass:
\begin{equation}
\label{eq:mass_conservation}
\int_\Omega \rho_i(t,\mathbf{x})\,\mathrm{d}\mathbf{x}
=
M_i,
\qquad
\forall t\ge0,\quad i\in\{L,F\},
\end{equation}
where \(M_i\) denotes the total mass of population \(i\). Without loss of generality, $M_i=N_i/(N_L+N_F)$, so that $M_L+M_F=1$, where $N_L$ and $N_F$ denote the numbers of leaders and followers.

The density dynamics follows the coupled PDEs given by 
\begin{subequations}\label{eq:LF_model}
\begin{align}
    &\partial_t \rho_L(t,\mathbf{x}) + \nabla \cdot [\rho_L(t,\mathbf{x})\,\mathbf{u}(t,\mathbf{x})] = 0,
    \label{eq: leader_evolution}\\
    &\partial_t \rho_F(t, \mathbf{x}) + \nabla \cdot [\rho_F(t, \mathbf{x})\mathbf{v}_{F}(t,\mathbf{x})] = D \; \Delta \rho_{F}(t, \mathbf{x}),
    \label{eq: follower_evolution}
\end{align}
\end{subequations}
where $\mathbf{u}:\mathbb{R}_{\geq 0} \times \Omega \to \mathbb{R}^2$ denotes the control velocity field, $D>0$ weights the diffusion strength, and $\mathbf{v}_F:\mathbb{R}_{\geq 0} \times \Omega \to \mathbb{R}^2$ models the leader-follower interactions. Specifically, we consider
\begin{equation}
    \mathbf v_F(t,\mathbf x)
    =
    \int_\Omega
    \mathbf g(\mathbf x-\mathbf y)\rho_L(t,\mathbf y)
    \,\mathrm d\mathbf y
    =
    (\mathbf g*\rho_L)(t,\mathbf x),
    \label{eq: interaction_field}
\end{equation}
where $\mathbf g$ is periodic with period $2\Lambda$ in each coordinate.
Although the formulation allows for different kinds of leader-follower interactions, here we consider a repulsive interaction obtained by periodizing the regularized kernel
\begin{equation}
    \frac{\mathbf x-\mathbf y}
    {\sqrt{\|\mathbf x-\mathbf y\|^2+\eta^2}}
    \exp\!\left(
        -\frac{\sqrt{\|\mathbf x-\mathbf y\|^2+\eta^2}}{L_r}
    \right).
    \label{eq:periodic_repulsive_kernel}
\end{equation}
Periodization is performed by summing the contributions of all copies of $\mathbf y$ translated by integer multiples of $2\Lambda$ in each coordinate. Here, $L_r>0$ denotes the interaction length scale and $\eta>0$ is a regularization parameter. The resulting periodic kernel $\mathbf g$ is bounded, odd, and smooth.

The model~\eqref{eq:LF_model} is initialized as $\rho_i(0,\cdot)=\rho_i^0$, $i\in\{L,F\}$.

The control input $\mathbf{u}$ in \eqref{eq: leader_evolution} is designed, following \citep{maffettone2025leader-follower}, to indirectly steer the follower density toward a desired profile $\bar{\rho}_F$. 
First, a target leader density $\bar\rho_L$ compatible with $\bar\rho_F$ is computed through a mass-preserving deconvolution procedure.
Then, the velocity field $\mathbf{u}$ yields exponential convergence of $\rho_L$ toward $\bar{\rho}_L$. Through the leader-follower interaction, this convergence induces the follower density $\rho_F$ to asymptotically reach the desired profile $\bar{\rho}_F$.

\subsection{Problem statement} 
\label{subsec:problem_statement}
Assume that the domain contains dangerous static regions represented, at the continuum level, by a spatial density 
$\rho_O:\Omega\to\mathbb{R}_{\ge 0}$. 
Let $R_k(\rho_i,\rho_O)$ denote the kernel-weighted overlap between population $i\in\{L,F\}$ and the dangerous regions, defined as
\begin{equation}
R_k(\rho_i,\rho_O)
=
\frac{1}{M_i}
\iint_{\Omega\times\Omega}
k(\mathbf x,\mathbf y)\rho_i(\mathbf x)\rho_O(\mathbf y)
\,\mathrm d\mathbf x\,\mathrm d\mathbf y.
\label{eq:Rk}
\end{equation}
where the kernel $k$ is obtained by periodizing the Gaussian kernel
\[
    \exp\!\left(
        -\frac{\|\mathbf x-\mathbf y\|^2}{2\sigma^2}
    \right),
\]
with $\sigma>0$, by summing its contributions over all periodic copies of $\mathbf y$. As proved in~\citep{tomaselli2026mean}, larger values of $R_k$ correspond to a higher spatial overlap between the two densities.

We assume that the initial density distributions of both populations satisfy
\begin{equation}
R_k(\rho_i^0,\rho_O)\leq \varepsilon_i,
\qquad i\in\{L,F\},
\label{eq:safe_initial_target}
\end{equation}
where $\varepsilon_i>0$ denotes the maximum admissible overlap between population $i$ and the dangerous regions.

The control objective is to steer the follower density toward a desired profile while keeping, throughout the entire evolution, the overlap of both populations with the dangerous regions under a given threshold.

To this end, we introduce the functionals
\begin{equation}
\mathcal{H}_i(\rho_i)
=
\varepsilon_i - R_k(\rho_i,\rho_O),
\qquad i\in\{L,F\},
\label{eq:barrier_functional}
\end{equation}
and define the associated safe sets $\mathcal{C}_i=\{\rho_i : \mathcal{H}_i(\rho_i) \geq 0\}$.
In the spirit of control barrier functions, our goal is to design the leader control input so as to render the sets $\mathcal{C}_i$, $i\in\{L,F\}$, forward invariant.

\begin{rem}[Nature of the safety requirement]
\label{rem:aggregate_safety}
The functionals \eqref{eq:barrier_functional} constrain the aggregate kernel-weighted overlap with the dangerous regions, which is the natural notion of safety at the mean-field level; they do not, by themselves, exclude that individual agents transiently enter those regions. The requirement becomes more stringent as $\varepsilon_i$ and $\sigma$ are reduced.
\end{rem}

\section{Higher-order Mean Field Control Barrier Functions}
\label{sec:ho-mf-cbf}
Consider a population described by a density $\rho(t,\mathbf{x})$ evolving on a domain $\Omega$ according to
\begin{equation}
\partial_t \rho = \mathcal{F}(\rho,\boldsymbol{\vartheta}),
\label{eq: general}
\end{equation}
where $\mathcal{F}$ denotes the density dynamics and $\boldsymbol{\vartheta}$ collects the variables through which the evolution of $\rho$ is influenced, including either the control input itself or other controlled state variables.
Assume that the dynamics preserve mass, so that
$\int_\Omega\mathcal F(\rho,\boldsymbol{\vartheta})\,\mathrm d\mathbf x=0$.

Let $\mathcal{H}(\rho)$ be a functional that defines the safe set
\begin{equation}
    \mathcal{C} = \{ \rho : \mathcal{H}(\rho) \geq 0 \}, \quad \partial\mathcal{C} = \{ \rho : \mathcal{H}(\rho) = 0 \}.
    \label{eq: safe_set_general}
\end{equation}
The total time derivative of $\mathcal{H}$ is given by 
\begin{equation}
\dot{\mathcal{H}}(\rho)
=
\int_\Omega \partial^f_\rho \mathcal{H}(\rho) \, \mathcal{F}(\rho, \boldsymbol{\vartheta})\, \mathrm{d}\mathbf{x}.
\end{equation}
In standard MF-CBF formulations \citep{fung2025mean}, the control input $\mathbf u$ appears explicitly in $\dot{\mathcal H}$, allowing one to enforce safety through a first-order barrier condition. However, in many systems, the control affects the state only indirectly and therefore does not appear explicitly in $\dot{\mathcal H}$. To formalize this scenario, we introduce the notion of relative degree.

\begin{defn}[Relative degree]
Consider the density dynamics \eqref{eq: general} and let $\mathcal{H}$ be a continuously differentiable functional. We say that $\mathcal{H}$ has relative degree $r$ with respect to the control input $\mathbf u$ if $r$ is the smallest integer such that the $r$-th total time derivative of $\mathcal{H}$, denoted by $\mathcal H^{(r)}$, depends explicitly on $\mathbf u$; that is, the coefficient multiplying $\mathbf u$ in $\mathcal H^{(k)}$ vanishes identically for all admissible states and all $k<r$, whereas for $k=r$ it does not vanish identically.
\end{defn}

The relative degree is thus a structural property of the pair $(\mathcal H,\mathcal F)$, in the same sense as in the finite-dimensional case and in~\citep{nurbekyan2026higher}. It identifies the derivative in which the control can enter, but it does not guarantee that the control coefficient is nonzero at every state reached along a given evolution. The latter condition, which we assume along the considered trajectory in Theorem~\ref{thm:graph_relative_degree} and Proposition~\ref{prop:follower_safety}, is what is needed for
the safety filter to be able to act.

Building on the concept of higher-order Control Barrier Functions \citep{xiao2021high}, we introduce the following notion of Higher-Order Mean-Field Control Barrier Function (HO-MF-CBF).

\begin{defn}[Higher-Order Mean-Field CBF]
Let $\mathcal H$ be a safety functional of relative degree $r$. Starting from $\psi_0(\rho) = \mathcal{H}(\rho)$, define the sequence of auxiliary functionals
\begin{equation}
\psi_k(\rho) = \dot{\psi}_{k-1}(\rho) + \alpha_{k-1}(\psi_{k-1}(\rho)), \quad k = 1, \dots, r-1
\label{eq: psik}
\end{equation}
where $\alpha_k:\mathbb R\to\mathbb R$, for $k=0,\ldots,r-1$, are extended class-$\mathcal K$ functions.
The functional $\mathcal H$ is said to be a Higher-Order Mean-Field Control Barrier Function if there exists a control input $\mathbf u$ such that
\begin{equation}
\dot{\psi}_{r-1}(\rho) + \alpha_{r-1}(\psi_{r-1}(\rho)) \geq 0.
\label{eq: condition_cbf}
\end{equation}
\end{defn}
The next result shows that condition \eqref{eq: condition_cbf} is sufficient to guarantee forward invariance of the safe set $\mathcal{C}$, provided that the auxiliary functionals are initially nonnegative.

\begin{thm}[Forward invariance of HO-MF-CBFs]
\label{thm:ho_mf_cbf_invariance}
Let $\mathcal{H}$ be a Higher-Order Mean-Field Control Barrier Function of relative degree $r$. Assume that, along the considered solution, $\psi_k(\rho(t,\cdot))$ is absolutely continuous in time for all $k=0,\ldots,r-1$.
Let $\rho^0(\mathbf{x}) = \rho(0,\mathbf{x})$ denote the density distribution at initial time $t=0$ and assume that
\[
\psi_k(\rho^0)\geq 0,
\qquad
k=0,\ldots,r-1.
\]
If the control input is chosen such that \eqref{eq: condition_cbf} holds for all $t\geq 0$, then the safe set $\mathcal{C}$ is forward invariant.
\end{thm}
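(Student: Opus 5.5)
The argument is a backward induction on the index $k$, using the comparison lemma at each step, exactly as in the finite-dimensional HO-CBF argument of \citep{xiao2021high}. The infinite-dimensional nature of the state enters only through the scalar functions $t\mapsto\psi_k(\rho(t,\cdot))$. Along the considered solution, set $y_k(t):=\psi_k(\rho(t,\cdot))$ for $k=0,\ldots,r-1$. By the standing assumption, each $y_k$ is absolutely continuous, and its derivative exists for almost every $t$. By \eqref{eq: psik}, this derivative coincides with $\dot\psi_k$ evaluated along the trajectory, so that
\[
\dot y_{k-1}(t)=y_k(t)-\alpha_{k-1}(y_{k-1}(t)),\qquad k=1,\ldots,r-1 .
\]
Condition \eqref{eq: condition_cbf}, imposed for all $t\ge0$, gives in addition $\dot y_{r-1}(t)\ge-\alpha_{r-1}(y_{r-1}(t))$ for almost every $t$.

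\textbf{First step.} I will show $y_{r-1}(t)\ge0$ for all $t\ge0$. Consider the scalar comparison system $\dot z=-\alpha_{r-1}(z)$ with $z(0)=y_{r-1}(0)\ge0$. Since $\alpha_{r-1}$ is locally Lipschitz and $\alpha_{r-1}(0)=0$, the origin is an equilibrium. By uniqueness, a solution starting at $z(0)\ge0$ remains nonnegative, and the fact that $\alpha_{r-1}$ is increasing rules out finite escape to $+\infty$. The comparison lemma \citep{khalil2002nonlinear}, applied to the absolutely continuous function $y_{r-1}$, then yields $y_{r-1}(t)\ge z(t)\ge0$.

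\textbf{Inductive step.} Suppose $y_k(t)\ge0$ for all $t$, with $k\ge1$. Then the identity above gives $\dot y_{k-1}\ge-\alpha_{k-1}(y_{k-1})$ almost everywhere. Since $y_{k-1}(0)=\psi_{k-1}(\rho^0)\ge0$ by hypothesis, the same comparison argument gives $y_{k-1}(t)\ge0$ for all $t$. Descending from $k=r-1$ to $k=1$ we reach $y_0(t)=\mathcal H(\rho(t,\cdot))\ge0$ for all $t\ge0$. Hence $\rho(t,\cdot)\in\mathcal C$, which is forward invariance.

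\textbf{Main obstacle.} The only delicate point is the legitimacy of the comparison lemma for functions that are merely absolutely continuous, with differential inequalities holding only almost everywhere, rather than $C^1$. I would handle this either by invoking the version of the comparison lemma valid for absolutely continuous functions, or directly by contradiction. Suppose $y_k$ becomes negative. Let $t_1$ be the last time before that at which $y_k=0$. On the subsequent interval where $y_k<0$ we have $-\alpha_k(y_k)>0$, so $\dot y_k>0$ almost everywhere there. Integrating this derivative, which absolute continuity permits, shows that $y_k$ is nondecreasing from its value $0$ at $t_1$, a contradiction. This direct argument even avoids the Lipschitz assumption.

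I also need to confirm that $\dot y_{k-1}=y_k-\alpha_{k-1}(y_{k-1})$ holds almost everywhere. This is true because $\dot\psi_{k-1}$ in \eqref{eq: psik} is by definition the total time derivative along the flow, so it coincides with the derivative of the absolutely continuous function $y_{k-1}$.
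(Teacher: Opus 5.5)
Your proof is correct and is the same argument as the paper's: the comparison lemma applied to the scalar function $t\mapsto\psi_{r-1}(\rho(t,\cdot))$ gives $\psi_{r-1}\ge0$, and the recursion $\psi_k=\dot\psi_{k-1}+\alpha_{k-1}(\psi_{k-1})$ is then descended step by step down to $\psi_0=\mathcal H\ge0$. Your contradiction argument for the almost-everywhere differential inequality is a sound and slightly more careful substitute for the paper's appeal to the comparison lemma, and it uses only the property $\alpha_k(s)<0$ for $s<0$.
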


\begin{pf}
Since $\psi_{r-1}(\rho^0)\ge0$ and the condition~\eqref{eq: condition_cbf} holds along the trajectory, the comparison lemma~\citep{khalil2002nonlinear} applied to the scalar function $\psi_{r-1}(\rho(t, \cdot))$, yields that $\psi_{r-1}(\rho(t, \cdot))\ge0$ for all $t\ge0$. By the recursive definition \eqref{eq: psik}, this implies
\[
\dot{\psi}_{r-2}+\alpha_{r-2}(\psi_{r-2})\geq 0,
\]
and hence $\psi_{r-2}(\rho(t, \cdot))\geq 0$ $\forall t\geq 0$. Repeating the same argument recursively yields $\psi_k(\rho)\geq 0$ for $k=0,\ldots,r-1$. In particular, since $\psi_0(\rho)=\mathcal{H}(\rho)$, we obtain $\mathcal{H}(\rho(t, \cdot))\geq 0$
$\forall t\geq 0$. Therefore, $\mathcal{C}$ is forward invariant. \qed
\end{pf}

\begin{rem}
For $r=1$, the proposed framework reduces to the standard MF-CBF formulations introduced in \citep{fung2025mean,tomaselli2026mean}.
\end{rem}

\begin{rem}
The recursive construction in Theorem~\ref{thm:ho_mf_cbf_invariance} mirrors finite-dimensional HO-CBFs~\citep{xiao2021high}, and the same recursion was derived independently in~\citep{nurbekyan2026higher}. There, the auxiliary functionals $\psi_k$ are expressed through the derivatives of $\mathcal H$ along the uncontrolled drift for any relative degree $r$, for a single population with control-affine agent dynamics. We do not claim the general recursion as a contribution. Our focus is on how relative degree greater than one arises from indirect, cross-population actuation, and on the resulting structure of the safety condition, established next.
\end{rem}

\subsection{Graph-induced relative degree in interconnected mean-field systems}
In several mean-field control problems, the control input does not act directly on the density appearing in the safety constraint. Instead, it transports another density, which in turn affects the evolution of the density of the population whose safety must be guaranteed. This situation naturally arises in the leader-follower density control problem described in Sec.~\ref{subsec:problem_statement}. More generally, indirect actuation can propagate through multiple interacting populations. We therefore characterize how the relative degree of a safety functional is determined by the directed interaction graph.

Consider $P$ interacting populations with densities $\rho_i:\mathbb{R}_{\geq 0}\times\Omega\rightarrow\mathbb{R}_{\geq 0}$, $i\in\mathcal{V}:=\{1,\ldots,P\}$, evolving on the periodic domain $\Omega$. Let $a\in\mathcal{V}$ denote the directly actuated population. The coupled density dynamics is assumed to have the form
\begin{equation}
    \partial_t \rho_i = \mathcal{F}_i(\boldsymbol{\rho}),
    \qquad i\neq a,
    \label{eq:interconnected_unactuated}
\end{equation}
and
\begin{equation}
    \partial_t \rho_a
    = \mathcal{F}_a(\boldsymbol{\rho})
    -\nabla\cdot(\rho_a \mathbf u),
    \label{eq:interconnected_actuated}
\end{equation}
where $\boldsymbol{\rho}:=(\rho_1,\ldots,\rho_P)$, and $\mathbf u:\mathbb{R}_{\geq0}\times\Omega\rightarrow\mathbb{R}^2$
is the control velocity field.

The interdependence among the population dynamics is represented by a directed graph $\mathcal{G}=(\mathcal{V},\mathcal{E})$ where an edge $(j,i)\in\mathcal E$ is present whenever $\mathcal F_i$ depends on $\rho_j$.

Let $\mathcal H(\rho_s)$ be a continuously Fr\'echet differentiable safety functional associated with population $s\in\mathcal{V}$. Denote by $d_\mathcal{G}(j,s)$ the length of the shortest directed path from node $j$ to node $s$, with $d_\mathcal{G}(s,s)=0$, and define the set
\begin{equation}
    S_k(s)
    :=
    \left\{
        j\in\mathcal{V}:
        d_\mathcal{G}(j,s)\leq k
    \right\}.
    \label{eq:Sk_definition}
\end{equation}
The following result characterizes how the dependence of the safety functional propagates through the interaction graph.

\begin{lem}[Propagation of functional dependence]
\label{lem:graph_dependency}
Let $\mathcal H(\rho_s)$ be a safety functional associated with population $s\in\mathcal V$, and assume that the density trajectories and the functionals involved are sufficiently regular so that all time derivatives up to order $d_\mathcal{G}(a,s)+1$, the corresponding first variations, and the spatial derivatives required by the integrations by parts below exist.
For every $k\leq d_\mathcal{G}(a,s)$, the $k$-th time derivative of $\mathcal H$ can depend only on populations whose directed distance to $s$ is at most $k$, namely
\begin{equation}
    \mathcal H^{(k)}
    =
    \mathcal H^{(k)}
    \left(
        \{\rho_j\}_{j\in S_k(s)}
    \right).
    \label{eq:Hk_dependency}
\end{equation}
\end{lem}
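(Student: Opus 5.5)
We argue by induction on $k$, with $0\le k\le d_\mathcal{G}(a,s)$. The inductive claim is slightly stronger than \eqref{eq:Hk_dependency}: for each such $k$, $\mathcal H^{(k)}$ is a (sufficiently regular) functional of $\{\rho_j\}_{j\in S_k(s)}$ alone, and it does not depend on $\mathbf u$.

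\textbf{Base case.} For $k=0$ we have $\mathcal H^{(0)}=\mathcal H(\rho_s)$ and $S_0(s)=\{s\}$, so there is nothing to prove.

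\textbf{Inductive step, chain rule.} Assume the claim holds for some $k<d_\mathcal{G}(a,s)$. We differentiate $\mathcal H^{(k)}$ in time by the chain rule in the sense of first variations recalled in Section~\ref{sec:math_preliminaries}:
\[
\mathcal H^{(k+1)}=\sum_{j\in S_k(s)}\int_\Omega \partial^f_{\rho_j}\mathcal H^{(k)}\,\partial_t\rho_j\,\mathrm d\mathbf x .
\]
This step is legitimate because $\partial_t\rho_j$ has zero mean by mass conservation, so it is an admissible perturbation. The regularity assumptions in the statement guarantee that these first variations exist.

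\textbf{Inductive step, substituting the dynamics.} We now replace each $\partial_t\rho_j$ using \eqref{eq:interconnected_unactuated}--\eqref{eq:interconnected_actuated}. The key observation is that $a\notin S_k(s)$, since $d_\mathcal{G}(a,s)>k$. Hence every $j\in S_k(s)$ satisfies $j\neq a$ and $\partial_t\rho_j=\mathcal F_j(\boldsymbol\rho)$, so no $\mathbf u$ term appears. By definition of $\mathcal G$, $\mathcal F_j$ depends only on $\rho_j$ (if self-dependence is present) and on those $\rho_m$ with $(m,j)\in\mathcal E$. For any such $m$, the edge $(m,j)$ followed by a path of length at most $k$ from $j$ to $s$ gives $d_\mathcal{G}(m,s)\le d_\mathcal{G}(j,s)+1\le k+1$, so $m\in S_{k+1}(s)$. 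The factor $\partial^f_{\rho_j}\mathcal H^{(k)}$ depends only on $\{\rho_j\}_{j\in S_k(s)}\subseteq S_{k+1}(s)$. Therefore, after integration over $\Omega$, $\mathcal H^{(k+1)}$ is a functional of $\{\rho_j\}_{j\in S_{k+1}(s)}$ only, and it is independent of $\mathbf u$. This closes the induction up to $k=d_\mathcal{G}(a,s)$.

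The spatial integrations by parts mentioned in the statement are needed only if one wants to write these derivatives in transport form. On the torus they produce no boundary terms, so they cannot enlarge the dependence set.

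\textbf{Main obstacle.} The delicate point is rigor rather than combinatorics. One must ensure that $\mathcal H^{(k)}$ is again Fréchet differentiable with respect to each $\rho_j$, so that the chain-rule identity above holds. This is where the regularity hypothesis of the lemma is invoked: it is assumed, not derived. One must also interpret "depends on $\rho_m$" in a way consistent with the edge definition, including the implicit self-loops, which do not change distances.
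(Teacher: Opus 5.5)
Your proposal is correct and follows the paper's proof essentially step for step. Both use induction on $k$ with base case $S_0(s)=\{s\}$ and the chain rule over $j\in S_k(s)$, observe that $a\notin S_k(s)$ for $k<d_\mathcal{G}(a,s)$ so that only the unactuated dynamics $\mathcal F_j$ enter, and enlarge the dependence set by one step along incoming edges. Your explicit bound $d_\mathcal{G}(m,s)\le d_\mathcal{G}(j,s)+1$ and your strengthened $\mathbf u$-independence hypothesis spell out what the paper leaves implicit: its chain-rule formula tacitly assumes $\mathcal H^{(k)}$ depends only on the densities.
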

\begin{pf}
The result follows by induction. For $k=0$, $\mathcal H=\mathcal H(\rho_s)$, and $S_0(s)=\{s\}$. Assume that \eqref{eq:Hk_dependency} holds for some $k<d_\mathcal{G}(a,s)$.
By the chain rule, differentiating along the density dynamics yields
\begin{equation}
    \mathcal H^{(k+1)}
    =
    \sum_{j\in S_k(s)}
    \int_\Omega
    \partial_{\rho_j}^{f} \mathcal  H^{(k)}
    \,
    \partial_t\rho_j\, \mathrm d \mathbf x.
    \label{eq:Hkplus1_dependency}
\end{equation}
Since $k<d_\mathcal{G}(a,s)$, the actuated population $a$ does not belong to $S_k(s)$. Hence, substituting the population dynamics gives
\begin{equation}
    \mathcal H^{(k+1)}
    =
    \sum_{j\in S_k(s)}
    \int_\Omega
    \partial_{\rho_j}^{f} \mathcal H^{(k)}
    \,
    \mathcal F_j(\boldsymbol{\rho})\,\mathrm d \mathbf x.
    \label{eq:Hkplus1_uncontrolled}
\end{equation}
By definition of the interaction graph, each $\mathcal{F}_j$ depends only on population $j$ and on populations directly influencing $j$. Consequently, the right-hand side can depend only on populations whose directed distance to $s$ is at most $k+1$, i.e., $\mathcal H^{(k+1)}=\mathcal H^{(k+1)}(\{\rho_j\}_{j\in S_{k+1}(s)})$. \qed
\end{pf}
The graph structure can now be directly related to the relative degree of the safety functional.

\begin{thm}[Graph-induced relative degree]
\label{thm:graph_relative_degree}
Consider the interconnected mean-field system \eqref{eq:interconnected_unactuated}-\eqref{eq:interconnected_actuated}.
Let $\mathcal H(\rho_s)$ be a safety functional associated with population $s\in\mathcal V$, satisfying the regularity assumptions of Lemma~\ref{lem:graph_dependency}. Assume that there exists a directed path from $a$ to $s$, and let $d:=d_\mathcal{G}(a,s)$.
Then the control input $\mathbf u$ does not appear explicitly in $\mathcal H^{(k)}$, $k=0,\ldots,d$.
Moreover,
\begin{equation}
    \mathcal H^{(d+1)}
    =
    I_d(\boldsymbol{\rho})
    +
    \int_\Omega
    \boldsymbol\Psi_d(\boldsymbol{\rho})
    \cdot \mathbf u(t, \mathbf{x})\,\mathrm d \mathbf x,
    \label{eq:Hdplus1_affine}
\end{equation}
where
\begin{equation}
    I_d(\boldsymbol{\rho})
    :=
    \sum_{j\in S_d(s)}
    \int_\Omega
    \partial_{\rho_j}^{f}\mathcal H^{(d)}
    \,
    \mathcal F_j(\boldsymbol{\rho})\,\mathrm d \mathbf x,
    \label{eq:Id_graph}
\end{equation}
and
\begin{equation}
    \boldsymbol\Psi_d(\boldsymbol{\rho})
    :=
    \rho_a
    \nabla
    \partial_{\rho_a}^{f}\mathcal H^{(d)}.
    \label{eq:Psi_graph}
\end{equation}
If $\boldsymbol\Psi_d(\boldsymbol{\rho})\not\equiv0$ along the considered trajectory, then $\mathcal H$ has relative degree
$r_{\mathcal H}=d+1$
with respect to $\mathbf u$.
\end{thm}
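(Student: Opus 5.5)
The argument rests on Lemma~\ref{lem:graph_dependency}, followed by one further differentiation in which the actuated dynamics \eqref{eq:interconnected_actuated} enters for the first time.

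\emph{Step 1: no control in derivatives of order at most $d$.} By Lemma~\ref{lem:graph_dependency}, for every $k\le d$ the derivative $\mathcal H^{(k)}$ is a functional of $\{\rho_j\}_{j\in S_k(s)}$ only. For $k\ge1$ it is produced by formula \eqref{eq:Hkplus1_uncontrolled} applied at index $k-1<d$. That formula involves only the uncontrolled fields $\mathcal F_j$, because $a\notin S_{k-1}(s)$. Hence $\mathbf u$ does not appear explicitly in $\mathcal H^{(k)}$, $k=0,\ldots,d$.

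\emph{Step 2: the $(d+1)$-th derivative.} Differentiate $\mathcal H^{(d)}$ with the chain rule \eqref{eq:Hkplus1_dependency}, summing over $j\in S_d(s)$. Since $d_{\mathcal G}(a,s)=d$, now $a\in S_d(s)$. For $j\neq a$ we substitute \eqref{eq:interconnected_unactuated}. For $j=a$ we substitute \eqref{eq:interconnected_actuated}, which splits into the drift contribution $\int_\Omega\partial^f_{\rho_a}\mathcal H^{(d)}\mathcal F_a\,\mathrm d\mathbf x$ and the control contribution
\[
-\int_\Omega \partial^f_{\rho_a}\mathcal H^{(d)}\,\nabla\cdot(\rho_a\mathbf u)\,\mathrm d\mathbf x .
\]
Collecting all drift terms gives $I_d$ in \eqref{eq:Id_graph}. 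Integrating the control contribution by parts on the flat torus, where the boundary terms vanish by periodicity, gives $\int_\Omega \rho_a\nabla\partial^f_{\rho_a}\mathcal H^{(d)}\cdot\mathbf u\,\mathrm d\mathbf x$. This is exactly the term with $\boldsymbol\Psi_d$ in \eqref{eq:Psi_graph}, and it is affine in $\mathbf u$. Here $\mathcal H^{(d)}$ itself is $\mathbf u$-free by Step 1, so the only $\mathbf u$ dependence comes from $\partial_t\rho_a$. Note also that adding a constant to $\partial^f_{\rho_a}\mathcal H^{(d)}$ does not affect the result: $\int\mathcal F_a=0$ and $\int\nabla\cdot(\rho_a\mathbf u)=0$ by mass preservation, and the gradient kills the constant.

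\emph{Step 3: relative degree.} By Step 1, the coefficient of $\mathbf u$ vanishes identically in $\mathcal H^{(k)}$ for $k\le d$. By Step 2, the coefficient of $\mathbf u$ in $\mathcal H^{(d+1)}$ is the field $\boldsymbol\Psi_d$. This coefficient is well defined as an element of the dual of the control space: the map $\mathbf u\mapsto\int\boldsymbol\Psi_d\cdot\mathbf u$ is the zero functional iff $\boldsymbol\Psi_d\equiv0$, since one can test with $\mathbf u=\boldsymbol\Psi_d$. So if $\boldsymbol\Psi_d\not\equiv0$ along the trajectory, the smallest order at which $\mathbf u$ enters is $d+1$, i.e.\ $r_{\mathcal H}=d+1$.

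\emph{Main obstacle.} The delicate point is Step 1 together with the regularity needed in Step 2. One must make sure that the iterated first variations are genuinely $\mathbf u$-independent, with no hidden dependence through time derivatives of $\rho_a$ at lower orders. One must also ensure that $\partial^f_{\rho_a}\mathcal H^{(d)}$ is spatially differentiable, so the integration by parts is legitimate. I would handle both by invoking the standing regularity hypotheses of Lemma~\ref{lem:graph_dependency} rather than proving them.
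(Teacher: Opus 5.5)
Your proposal is correct and follows the paper's proof essentially step for step. Lemma~\ref{lem:graph_dependency} excludes $\mathbf u$ from $\mathcal H^{(k)}$ for $k\le d$ because $a\notin S_{k-1}(s)$, and one further chain-rule differentiation, with the actuated dynamics substituted and the transport term integrated by parts on the torus, produces $I_d$ and $\boldsymbol\Psi_d$; your extra remarks on invariance of the first variation under additive constants (which strictly needs $\int_\Omega\mathcal F_j\,\mathrm d\mathbf x=0$ for every $j\in S_d(s)$, not only $j=a$) and on testing with $\mathbf u=\boldsymbol\Psi_d$ are sound refinements that the paper leaves implicit.
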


\begin{pf}
Since $a\notin S_k(s)$ for $k<d$, Lemma~\ref{lem:graph_dependency} implies that $\mathcal H^{(k)}$ does not depend on $\rho_a$ for $k<d$. As $\mathbf u$ enters the system only through the dynamics of $\rho_a$, it does not appear explicitly in $\mathcal H^{(k)}$ for $k=0,\ldots,d$.
At order $d$, $a\in S_d(s)$, so $\mathcal H^{(d)}$ may depend on $\rho_a$. Differentiating once more as in \eqref{eq:Hkplus1_dependency}, substituting \eqref{eq:interconnected_unactuated}-\eqref{eq:interconnected_actuated}, and integrating by parts the transport term of population $a$ on the periodic domain,
\[
    -\int_\Omega
    \partial_{\rho_a}^{f}\mathcal H^{(d)}
    \,
    \nabla\cdot(\rho_a \mathbf u)\,\mathrm d \mathbf x
    =
    \int_\Omega
    \rho_a
    \nabla
    \partial_{\rho_a}^{f}\mathcal H^{(d)}
    \cdot \mathbf  u\,\mathrm d \mathbf x,
\]
yields \eqref{eq:Hdplus1_affine}. Hence, if $\boldsymbol\Psi_d\not\equiv0$, the control appears explicitly for the first time in $\mathcal H^{(d+1)}$, and $r_{\mathcal H}=d+1$. \qed
\end{pf}

\begin{cor}[Affine HO-MF-CBF constraint]
\label{cor:graph_affine_constraint}
Under the assumptions of Theorem~\ref{thm:graph_relative_degree}, let $d=d_\mathcal{G}(a,s)$, let $\alpha_k:\mathbb R\to\mathbb R$, $k=0,\ldots,d$, be extended class-$\mathcal K$ functions with $\alpha_k$ of class $C^{d-k}$, and define $\psi_0=\mathcal H$ and
\[
    \psi_k=\dot\psi_{k-1}
    +\alpha_{k-1}(\psi_{k-1}),
    \qquad k=1,\ldots,d.
\]
Then there exists a function $\Gamma_d$ of $(\mathcal H,\dot{\mathcal H},\ldots,\mathcal H^{(d)})$ only, and hence independent of $\mathbf u$, such that the HO-MF-CBF condition $\dot\psi_d+\alpha_d(\psi_d)\geq 0$ is equivalent to the affine inequality
\begin{equation}
\int_\Omega \boldsymbol\Psi_d(\boldsymbol\rho)\cdot \mathbf u(t,\mathbf x)\,\mathrm d\mathbf x
    \geq
    -I_d(\boldsymbol\rho)
    -\Gamma_d
    -\alpha_d(\psi_d).
\label{eq:graph_affine_constraint}
\end{equation}
Consequently, if $\psi_k(0)\geq0$ for $k=0,\ldots,d$, and $\mathbf u$ satisfies \eqref{eq:graph_affine_constraint} for all $t\geq0$, then $\mathcal C=\{\rho_s:\mathcal H(\rho_s)\geq0\}$ is forward invariant.
\end{cor}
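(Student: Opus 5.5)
We prove the corollary in three steps: first we expand the auxiliary functionals $\psi_k$ as combinations of time derivatives of $\mathcal H$, then we insert the affine expression \eqref{eq:Hdplus1_affine} from Theorem~\ref{thm:graph_relative_degree}, and finally we invoke Theorem~\ref{thm:ho_mf_cbf_invariance}.

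\emph{Step 1: structure of $\psi_k$.} We show by induction on $k=0,\ldots,d$ that
\[
\psi_k=\mathcal H^{(k)}+\Phi_k\bigl(\mathcal H,\dot{\mathcal H},\ldots,\mathcal H^{(k-1)}\bigr),
\]
where $\Phi_0\equiv0$ and $\Phi_k$ is of class $C^{d-k}$ in its arguments. For the inductive step, note that $\psi_k=\dot\psi_{k-1}+\alpha_{k-1}(\psi_{k-1})$. The chain rule applied to $\Phi_{k-1}(\mathcal H,\ldots,\mathcal H^{(k-2)})$ gives a sum of partial derivatives of $\Phi_{k-1}$ multiplied by $\dot{\mathcal H},\ldots,\mathcal H^{(k-1)}$. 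Hence
\[
\dot\psi_{k-1}=\mathcal H^{(k)}+(\text{function of }\mathcal H,\ldots,\mathcal H^{(k-1)}).
\]
The term $\alpha_{k-1}(\psi_{k-1})$ is likewise a function of $\mathcal H,\ldots,\mathcal H^{(k-1)}$. Regularity is preserved: $\alpha_{k-1}\in C^{d-k+1}$ and $\Phi_{k-1}\in C^{d-k+1}$, and one derivative is lost at each step. This is exactly why the hypothesis $\alpha_k\in C^{d-k}$ is imposed, since it guarantees that $\Phi_{k-1}$ is still $C^1$ when we differentiate at step $d$. The time differentiability of $\mathcal H^{(k)}$ for $k\le d$ is provided by the regularity assumptions of Lemma~\ref{lem:graph_dependency}.

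\emph{Step 2: the affine constraint.} Differentiating $\psi_d$ once more gives
\[
\dot\psi_d=\mathcal H^{(d+1)}+\Gamma_d,
\qquad
\Gamma_d:=\sum_{i=0}^{d-1}\partial_i\Phi_d\bigl(\mathcal H,\ldots,\mathcal H^{(d-1)}\bigr)\,\mathcal H^{(i+1)},
\]
where $\partial_i\Phi_d$ denotes the partial derivative of $\Phi_d$ with respect to its $(i+1)$-th argument $\mathcal H^{(i)}$. Thus $\Gamma_d$ depends only on $(\mathcal H,\ldots,\mathcal H^{(d)})$. By Theorem~\ref{thm:graph_relative_degree}, none of $\mathcal H^{(k)}$, $k\le d$, contains $\mathbf u$, so $\Gamma_d$ is independent of $\mathbf u$. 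By the same theorem, $\psi_d$ is also $\mathbf u$-independent, and so is $\alpha_d(\psi_d)$. Substituting \eqref{eq:Hdplus1_affine} into $\dot\psi_d+\alpha_d(\psi_d)\ge0$ and moving every $\mathbf u$-independent term to the right-hand side yields \eqref{eq:graph_affine_constraint}. Every step is an identity, so the equivalence holds in both directions.

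\emph{Step 3: forward invariance.} Set $r=d+1$. If the inequality holds for all $t\ge0$, the HO-MF-CBF condition \eqref{eq: condition_cbf} holds for all $t\ge0$. Together with $\psi_k(0)\ge0$ for $k=0,\ldots,d$, Theorem~\ref{thm:ho_mf_cbf_invariance} gives forward invariance of $\mathcal C$.

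The absolute continuity required by that theorem follows from the $C^1$ time regularity of each $\psi_k$, $k\le d$. This in turn follows from Step 1, because each $\psi_k$ involves derivatives of $\mathcal H$ only up to order $k\le d$, and these exist by the regularity assumptions of Lemma~\ref{lem:graph_dependency}. For $\psi_d$ the derivative $\dot\psi_d$ involves $\mathcal H^{(d+1)}$ and hence $\mathbf u$, so it is defined whenever $\mathbf u$ is measurable in time; that suffices for absolute continuity.

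The invariance argument does not require the nondegeneracy condition $\boldsymbol\Psi_d\not\equiv0$. That condition matters only to ensure the constraint can be satisfied, not to ensure safety once it is satisfied.

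The main obstacle is the bookkeeping in Step 1. The point to get right is that $\Gamma_d$ depends only on derivatives of $\mathcal H$ up to order $d$ and never on $\mathcal H^{(d+1)}$. This holds because, by induction, $\Phi_d$ depends only on $\mathcal H,\ldots,\mathcal H^{(d-1)}$, so its time derivative brings in derivatives of $\mathcal H$ only up to order $d$.
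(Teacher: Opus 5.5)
You follow the paper's proof essentially step for step. You expand $\psi_k=\mathcal H^{(k)}+\Phi_k(\mathcal H,\ldots,\mathcal H^{(k-1)})$ by induction and differentiate once more to get $\dot\psi_d=\mathcal H^{(d+1)}+\Gamma_d$ with $\Gamma_d=\sum_{i=0}^{d-1}\partial_i\Phi_d\,\mathcal H^{(i+1)}$. You then substitute \eqref{eq:Hdplus1_affine} and conclude with Theorem~\ref{thm:ho_mf_cbf_invariance} for $r=d+1$.

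The one thing to correct is your regularity count. You claim $\Phi_k\in C^{d-k}$, which leaves $\Phi_d$ only continuous, yet Step~2 differentiates $\Phi_d$. Your remark that $\Phi_{k-1}$ is still $C^1$ at step $d$ covers only the last differentiation inside the induction, namely of $\Phi_{d-1}$. It does not cover the extra differentiation in Step~2.

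The true count is one order better. $\Phi_1=\alpha_0(\mathcal H)$ is already $C^{d}$, and the recursion is $\Phi_k=\sum_{i=0}^{k-2}\partial_i\Phi_{k-1}\,\mathcal H^{(i+1)}+\alpha_{k-1}\bigl(\mathcal H^{(k-1)}+\Phi_{k-1}\bigr)$ with $\alpha_{k-1}\in C^{d-k+1}$. The same induction then gives $\Phi_k\in C^{d-k+1}$, as the paper states. Hence $\Phi_d\in C^1$, and Step~2 is justified under exactly the stated hypothesis $\alpha_k\in C^{d-k}$.

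Your justification of absolute continuity of $\psi_d$ is also loose. Measurability of $\mathbf u$ alone does not give it; you need $\dot\psi_d$ to exist and be locally integrable. The paper does not argue this point either and simply relies on the hypotheses of Theorem~\ref{thm:ho_mf_cbf_invariance}.
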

\begin{pf}
By induction on $k$, one has
\[
    \psi_k=\mathcal H^{(k)}+\phi_k,
\]
where $\phi_k$ depends only on $\mathcal H,\ldots,\mathcal H^{(k-1)}$.
Moreover, the regularity of $\alpha_0,\ldots,\alpha_{k-1}$ ensures that $\phi_k$ is of class $C^{d-k+1}$.

Differentiating $\psi_d$ along the trajectories then gives
\[
    \dot\psi_d=\mathcal H^{(d+1)}+\Gamma_d,
    \qquad
    \Gamma_d=
    \sum_{i=0}^{d-1}
    \partial_i\phi_d\,\mathcal H^{(i+1)}.
\]
The term $\Gamma_d$ depends only on $\mathcal H,\ldots,\mathcal H^{(d)}$ and is therefore independent of $\mathbf u$ by
Theorem~\ref{thm:graph_relative_degree}. Substituting \eqref{eq:Hdplus1_affine} into the expression for $\dot\psi_d$ yields \eqref{eq:graph_affine_constraint}. Finally, forward invariance follows from Theorem~\ref{thm:ho_mf_cbf_invariance} with $r=d+1$.
\qed
\end{pf}
For $d=1$, $\Gamma_1=\alpha_0'(\mathcal H)\,\dot{\mathcal H}$, and \eqref{eq:graph_affine_constraint} reads
\[
\int_\Omega \boldsymbol\Psi_1\cdot\mathbf u\,\mathrm d\mathbf x
\ge
-I_1-\alpha_0'(\mathcal H)\,\dot{\mathcal H}-\alpha_1\big(\dot{\mathcal H}+\alpha_0(\mathcal H)\big),
\]
which is the form used for the follower population in Sec.~\ref{sec:lf_safety_constraints}.

\begin{rem}[Single- vs. cross-population actuation]
In \citep{nurbekyan2026higher}, relative degree two arises for a single population whose agents follow $\dot{\mathbf x}=\mathbf f+g\mathbf u$ on a phase space, the safety functional depends only on the position marginal, and the control enters through the velocity components. In Theorem~\ref{thm:graph_relative_degree}, by contrast, the constrained population and the actuated population may be distinct and may obey different dynamics. In particular, the control acts on the constrained population only through the inter-population dependencies encoded by the interaction graph, and the control coefficient $\boldsymbol\Psi_d(\boldsymbol\rho)=\rho_a\nabla\partial_{\rho_a}^f\mathcal H^{(d)}$ is supported on the actuated population.
\end{rem}

\section{Safety Constraints for the Leader-Follower Mean-Field Model}
\label{sec:lf_safety_constraints}

We now specialize the HO-MF-CBF framework to the leader-follower model introduced in Section~\ref{subsec:problem_statement}, with the safety functionals associated with the dangerous regions defined in \eqref{eq:barrier_functional}.

\subsection{Follower safety}
Since the control affects $\rho_F$ only through $\rho_L$, via the interaction field~\eqref{eq: interaction_field}, the follower safety functional has relative degree higher than one, as formalized next.

\begin{prop}[Follower safety constraint]
\label{prop:follower_safety}
Consider the leader-follower model~\eqref{eq:LF_model}
on the periodic domain $\Omega$, with sufficiently smooth periodic solutions and a continuously differentiable interaction kernel
$\mathbf g$ with bounded Jacobian. Let
\[
\mathcal H_F(\rho_F)=\varepsilon_F-R_k(\rho_F,\rho_O),
\qquad
\mathcal C_F=\{\rho_F:\mathcal H_F(\rho_F)\ge0\}.
\]
Assume that $\mathcal H_F$ is continuously Fr\'echet differentiable, $\partial^f_{\rho_F}\mathcal H_F$ is spatially differentiable, and that the coefficient
\begin{equation}
\boldsymbol\Psi_F
=\rho_L(t,\mathbf y)\int_\Omega \rho_F(t,\mathbf x) J_{\mathbf y}\mathbf g(\mathbf x-\mathbf y)^\top \nabla\partial^f_{\rho_F}\mathcal H_F(\rho_F) \,\mathrm d\mathbf x
\label{eq:psi_F}
\end{equation}
is not identically zero along the considered trajectory. Then $\mathcal H_F$ has relative degree two with respect to the leader control field $\mathbf u$, and
\begin{equation}
\ddot{\mathcal H}_F = I_F(\rho_F,\rho_L) + \int_\Omega \boldsymbol\Psi_F\cdot\mathbf u(t,\mathbf y)\,\mathrm d\mathbf y,
\label{eq:follower_second_derivative_decomposition}
\end{equation}
where $I_F$ collects all control-independent terms.

In addition, let $\alpha_{F,0},\alpha_{F,1}:\mathbb R\to\mathbb R$ be continuously differentiable extended class-$\mathcal K$ functions and define the set 
\begin{equation}
\begin{aligned}
\mathcal K_F := \Big\{ \mathbf{u}:{} & \int_\Omega
\boldsymbol\Psi_F(\mathbf y)\cdot\mathbf u(\mathbf y)\,\mathrm d\mathbf y
\ge
-I_F(\rho_F,\rho_L) \\
& -\dot \alpha_{F,0}(\mathcal H_F)
-\alpha_{F,1}
\big(\dot{\mathcal H}_F+\alpha_{F,0}(\mathcal H_F) \big) \Big\}.
\end{aligned}
\label{eq:follower_safe_set}
\end{equation}
where $\dot\alpha_{F,0}(\mathcal H_F)$ denotes the total time derivative of $\alpha_{F,0}(\mathcal H_F)$, i.e., $\alpha_{F,0}'(\mathcal H_F)\,\dot{\mathcal H}_F$ for time-invariant $\alpha_{F,0}$; for the time-varying gains of Sec.~\ref{subsec:feasibility}, it also includes $\partial_t\alpha_{F,0}$.
Assume that
\begin{equation}
\mathcal H_F(\rho_F^0)\ge 0,
\qquad
\dot{\mathcal H}_F(\rho_F^0,\rho_L^0) + \alpha_{F,0}\big(\mathcal H_F(\rho_F^0)\big) \ge 0.
\label{eq:initial_conditions}
\end{equation}
If $\mathbf{u} \in \mathcal K_F$ for all \(t\ge0\), then \(\mathcal C_F\) is forward invariant.
\end{prop}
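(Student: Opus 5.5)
The plan is to treat the proposition as the instance $P=2$, $a=L$, $s=F$ of Theorem~\ref{thm:graph_relative_degree} and Corollary~\ref{cor:graph_affine_constraint}. The interaction graph has the single edge $(L,F)$, since $\partial_t\rho_F$ depends on $\rho_L$ through $\mathbf v_F=\mathbf g*\rho_L$, while $\partial_t\rho_L$ depends only on $\mathbf u$. Hence $d=d_\mathcal G(L,F)=1$.

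\emph{First derivative.} We compute $\dot{\mathcal H}_F$ explicitly. The first variation is $\partial^f_{\rho_F}\mathcal H_F=-\frac1{M_F}\int_\Omega k(\cdot,\mathbf y)\rho_O(\mathbf y)\,\mathrm d\mathbf y$. It does not depend on $\rho_F$, because $R_k$ is linear in $\rho_F$, and $M_F$ is conserved. Using \eqref{eq: follower_evolution} and integrating by parts on the torus, we obtain
\[
\dot{\mathcal H}_F=\int_\Omega \rho_F\,\nabla\partial^f_{\rho_F}\mathcal H_F\cdot(\mathbf g*\rho_L)\,\mathrm d\mathbf x+D\int_\Omega \rho_F\,\Delta\partial^f_{\rho_F}\mathcal H_F\,\mathrm d\mathbf x .
\]
The diffusion term has been moved onto the smooth test function by two integrations by parts. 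This expression contains no $\mathbf u$, which matches Lemma~\ref{lem:graph_dependency} with $k=1\le d$.

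\emph{Second derivative.} We differentiate $\dot{\mathcal H}_F$ as in \eqref{eq:Hkplus1_dependency}, which gives two contributions. The $\rho_F$-variation, paired with $\partial_t\rho_F$, is control free and goes into $I_F$. The $\rho_L$-variation is
\[
\partial^f_{\rho_L}\dot{\mathcal H}_F(\mathbf y)=\int_\Omega\rho_F(\mathbf x)\nabla\partial^f_{\rho_F}\mathcal H_F(\mathbf x)\cdot\mathbf g(\mathbf x-\mathbf y)\,\mathrm d\mathbf x .
\]
We pair it with $-\nabla\cdot(\rho_L\mathbf u)$ and integrate by parts in $\mathbf y$. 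This produces $\int\rho_L\nabla_{\mathbf y}\partial^f_{\rho_L}\dot{\mathcal H}_F\cdot\mathbf u\,\mathrm d\mathbf y$, and differentiating under the integral sign gives $\nabla_{\mathbf y}\partial^f_{\rho_L}\dot{\mathcal H}_F=\int\rho_F J_{\mathbf y}\mathbf g(\mathbf x-\mathbf y)^\top\nabla\partial^f_{\rho_F}\mathcal H_F\,\mathrm d\mathbf x$. That is exactly $\boldsymbol\Psi_F$ in \eqref{eq:psi_F}, so \eqref{eq:follower_second_derivative_decomposition} follows, with $I_F$ as in \eqref{eq:Id_graph}. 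The bounded Jacobian of $\mathbf g$ and the smoothness of $\rho_F,\rho_O,k$ justify the differentiation under the integral and the dominated-convergence steps. The nondegeneracy assumption $\boldsymbol\Psi_F\not\equiv0$ then gives relative degree two by Theorem~\ref{thm:graph_relative_degree}.

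\emph{Invariance.} With $d=1$, the remark after Corollary~\ref{cor:graph_affine_constraint} gives $\Gamma_1=\dot\alpha_{F,0}(\mathcal H_F)$. So $\mathbf u\in\mathcal K_F$ is exactly $\dot\psi_1+\alpha_{F,1}(\psi_1)\ge0$, where $\psi_1=\dot{\mathcal H}_F+\alpha_{F,0}(\mathcal H_F)$. For time-varying gains, the extra $\partial_t\alpha_{F,0}$ is simply absorbed into $\dot\alpha_{F,0}$. Condition \eqref{eq:initial_conditions} is precisely $\psi_0(0),\psi_1(0)\ge0$. We then invoke Theorem~\ref{thm:ho_mf_cbf_invariance} with $r=2$, after checking that $\psi_0,\psi_1$ are absolutely continuous in time. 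This holds because $\mathcal H_F$ is $C^2$ in time along smooth solutions and $\alpha_{F,0}\in C^1$.

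The main obstacle is the rigorous computation of $\partial^f_{\rho_L}\dot{\mathcal H}_F$ and the exchange of the $\mathbf x$- and $\mathbf y$-integrals and derivatives. I would handle it by writing $\dot{\mathcal H}_F$ as a double integral that is bilinear in $(\rho_F,\rho_L)$, and then using Fubini together with the boundedness of $\mathbf g$, $J\mathbf g$ and $\nabla\partial^f_{\rho_F}\mathcal H_F$ on the compact torus.
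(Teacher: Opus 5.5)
Your proposal is correct and follows essentially the same route as the paper. Both compute $\dot{\mathcal H}_F$ by periodic integration by parts, take its $\rho_L$-variation and the $\mathbf y$-gradient of that to identify $\boldsymbol\Psi_F$, invoke Theorem~\ref{thm:graph_relative_degree} with $d=1$, and conclude through Corollary~\ref{cor:graph_affine_constraint} and Theorem~\ref{thm:ho_mf_cbf_invariance} with $r=2$. Your explicit $\rho_F$-independent first variation and your absolute-continuity check for $\psi_0,\psi_1$ are correct details that the paper leaves implicit.
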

\begin{pf}
The follower dynamics is a special case of the interconnected mean-field system of Theorem~\ref{thm:graph_relative_degree}, with the follower population as the constrained population and the leader population as the actuated one.
The leader density satisfies
\[
\partial_t\rho_L=-\nabla\cdot(\rho_L\mathbf u).
\]
By integration by parts, given the periodic boundary conditions of $\Omega$, the first
time derivative of \(\mathcal H_F\) is
\begin{equation}
\begin{aligned}
\dot{\mathcal H}_F
&=
\int_\Omega \nabla\partial^f_{\rho_F}\mathcal H_F(\rho_F) \cdot \rho_F(t,\mathbf x)\mathbf v_F(t,\mathbf x) \,\mathrm d\mathbf x \\
&\quad+ D\int_\Omega \Delta\partial^f_{\rho_F}\mathcal H_F(\rho_F) \rho_F(t,\mathbf x) \,\mathrm d\mathbf x ,
\end{aligned}
\label{eq:HF_dot}
\end{equation}
where
\[
\mathbf v_F(t,\mathbf x) = \int_\Omega \mathbf g(\mathbf x-\mathbf y) \rho_L(t,\mathbf y)\,\mathrm d\mathbf y .
\]
Thus \(\dot{\mathcal H}_F\) depends on \(\rho_L\), but not explicitly
on \(\mathbf u\).

The diffusion term in \eqref{eq:HF_dot} does not depend on $\rho_L$. Hence
\[
\partial^f_{\rho_L}\dot{\mathcal H}_F
= \int_\Omega \rho_F(t,\mathbf x) \nabla\partial^f_{\rho_F}\mathcal H_F(\rho_F) \cdot
\mathbf g(\mathbf x-\mathbf y) \,\mathrm d\mathbf x .
\]
Taking the spatial gradient with respect to $\mathbf y$ gives
\[
\nabla_{\mathbf y}\partial^f_{\rho_L}\dot{\mathcal H}_F = \int_\Omega \rho_F(t,\mathbf x)
J_{\mathbf y}\mathbf g(\mathbf x-\mathbf y)^\top \nabla\partial^f_{\rho_F}\mathcal H_F(\rho_F) \,\mathrm d\mathbf x .
\]
By Theorem~\ref{thm:graph_relative_degree}, since the leader population is at directed distance one from the follower population, the follower safety functional has relative degree two with respect to $\mathbf u$, and
\[
\ddot{\mathcal H}_F = I_F(\rho_F,\rho_L) + \int_\Omega \boldsymbol\Psi_F(\mathbf y)\cdot\mathbf u(t,\mathbf y)\,\mathrm d\mathbf y,
\]
where $I_F$ collects all the control-independent terms and 
\[
\boldsymbol\Psi_F(\mathbf{y}) = \rho_L(t,\mathbf y) \nabla_{\mathbf y} \partial^f_{\rho_L}\dot{\mathcal H}_F(\rho_F, \rho_L),
\]
which gives \eqref{eq:psi_F}. Since, by assumption, $\boldsymbol\Psi_F$ is not identically zero, the relative-degree-two condition is nondegenerate along the considered trajectory.

Hence, the constraint $\mathbf u\in \mathcal K_F$ is precisely the affine HO-MF-CBF condition of Corollary~\ref{cor:graph_affine_constraint} with $d=1$.
Therefore, under the initial conditions~\eqref{eq:initial_conditions}, Theorem~\ref{thm:ho_mf_cbf_invariance} guarantees that $\mathcal C_F$ is forward invariant.
\qed
\end{pf}
For completeness, since $\mathcal H_F$ is affine in $\rho_F$ and the dangerous regions are static, the control-independent term in~\eqref{eq:follower_second_derivative_decomposition} is
\begin{equation}
I_F=\int_\Omega\big(\nabla\phi_F\cdot\mathbf v_F+D\Delta\phi_F\big)\big(D\Delta\rho_F-\nabla\cdot(\rho_F\mathbf v_F)\big)\,\mathrm d\mathbf x,
\label{eq:Ir_expression}
\end{equation}
with $\phi_F:=\partial^f_{\rho_F}\mathcal H_F$; after integration by parts, it can be evaluated directly on empirical densities.

\subsection{Leader safety}
Unlike the follower population, the leader population is directly actuated by the control field $\mathbf u$. As a result, the leader safety functional has relative degree one, as formalized in the following proposition.

\begin{prop}[Leader safety constraint]
\label{prop:leader_safety}
Let
\[
\mathcal H_L(\rho_L) = \varepsilon_L-R_k(\rho_L,\rho_O).
\]
Assume that $\rho_L$, $\mathbf u$, and $\mathcal H_L$ are sufficiently regular and periodic so that the differentiation of $\mathcal H_L$ along the leader dynamics and the integration by parts below are well defined.

Then $\mathcal H_L$ has relative degree one with respect to the control field $\mathbf u$, and
\begin{equation}
\dot{\mathcal H}_L = \int_\Omega \rho_L(t,\mathbf x) \nabla\partial^f_{\rho_L}\mathcal H_L(\rho_L) \cdot \mathbf u(t,\mathbf x) \,\mathrm d\mathbf x .
\label{eq:H_L_dot}
\end{equation}
In addition, let $\alpha_L:\mathbb R\to\mathbb R$ be an extended class-$\mathcal K$ function and define
\begin{equation}
\mathcal K_L = \left\{
\mathbf u: \int_\Omega \rho_L \nabla\partial^f_{\rho_L}\mathcal H_L(\rho_L) \cdot \mathbf u \,\mathrm d\mathbf x
\ge
-\alpha_L(\mathcal H_L) \right\}.
\label{eq:leader_safe_set}
\end{equation}
Assuming that 
\begin{equation}
    \mathcal H_L(\rho_L^0)\ge0
\label{eq:initial_condition_leader}
\end{equation}
if $\mathbf u(t,\cdot)\in\mathcal K_L$  for all $t\ge0$, then $\mathcal C_L$ is forward invariant.
\end{prop}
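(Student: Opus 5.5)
The argument specializes Theorem~\ref{thm:graph_relative_degree} and Corollary~\ref{cor:graph_affine_constraint} to the case $d=0$, where the constrained population coincides with the actuated one. In that setting $S_0(s)=\{s\}$ and $\mathcal H_L^{(0)}=\mathcal H_L$, and since the leader dynamics~\eqref{eq: leader_evolution} has no drift term ($\mathcal F_L\equiv0$), the control-independent part $I_0$ vanishes. Rather than rely only on the abstract statement, I would compute directly. By the chain rule along~\eqref{eq: leader_evolution},
\[
\dot{\mathcal H}_L=\int_\Omega \partial^f_{\rho_L}\mathcal H_L(\rho_L)\,\partial_t\rho_L\,\mathrm d\mathbf x=-\int_\Omega \partial^f_{\rho_L}\mathcal H_L(\rho_L)\,\nabla\cdot(\rho_L\mathbf u)\,\mathrm d\mathbf x .
\]
Integrating by parts on the flat torus removes the boundary term by periodicity and gives~\eqref{eq:H_L_dot}. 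Here I would note that $\partial^f_{\rho_L}\mathcal H_L(\mathbf x)=-\frac{1}{M_L}\int_\Omega k(\mathbf x,\mathbf y)\rho_O(\mathbf y)\,\mathrm d\mathbf y$ is smooth and periodic because the periodized Gaussian is. Mass conservation makes the normalization $1/M_L$ time-independent, so no extra term arises. Since $\mathcal H_L$ is affine in $\rho_L$, this first variation is independent of $\rho_L$, and the integration by parts needs only regularity of $\rho_L\mathbf u$.

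For the relative-degree claim, $\mathcal H_L^{(0)}=\mathcal H_L$ depends only on $\rho_L$ and not explicitly on $\mathbf u$. By contrast, $\dot{\mathcal H}_L$ is linear in $\mathbf u$ with coefficient $\rho_L\nabla\partial^f_{\rho_L}\mathcal H_L$. To show that this coefficient does not vanish identically over admissible states, I would argue that $\nabla(k*\rho_O)$ is not identically zero whenever $\rho_O$ is nonconstant, since a nonconstant smooth periodic function has a nonzero gradient somewhere. One can then choose an admissible $\rho_L$ with mass concentrated there. This nondegeneracy step is the only point needing care. I expect it to be the main (mild) obstacle, because the definition of relative degree requires the coefficient to be generically nonzero, not nonzero along every trajectory. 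It may need an implicit assumption that the dangerous regions are not spatially uniform.

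Forward invariance then follows from Theorem~\ref{thm:ho_mf_cbf_invariance} with $r=1$ and $\psi_0=\mathcal H_L$. The condition $\mathbf u(t,\cdot)\in\mathcal K_L$ is exactly $\dot{\mathcal H}_L+\alpha_L(\mathcal H_L)\ge0$. Together with~\eqref{eq:initial_condition_leader} and absolute continuity of $t\mapsto\mathcal H_L(\rho_L(t,\cdot))$, which follows from the regularity assumptions, the comparison lemma gives $\mathcal H_L(\rho_L(t,\cdot))\ge0$ for all $t\ge0$. Hence $\mathcal C_L$ is forward invariant.
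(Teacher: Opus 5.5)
Your proposal is correct and follows the paper's proof: the chain rule along the leader dynamics plus periodic integration by parts (the $d=0$ case of Theorem~\ref{thm:graph_relative_degree}) gives \eqref{eq:H_L_dot}, and forward invariance is Theorem~\ref{thm:ho_mf_cbf_invariance} with $r=1$. Your nondegeneracy check is extra, since the paper simply asserts relative degree one; note that going from nonconstant $\rho_O$ to $\nabla(k*\rho_O)\not\equiv0$ also needs the periodized Gaussian's Fourier coefficients to be strictly positive, so that convolution with $k$ cannot remove the nonconstant modes of $\rho_O$.
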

\begin{pf}
Equation~\eqref{eq:H_L_dot} follows from $\dot{\mathcal H}_L=-\int_\Omega \partial^f_{\rho_L}\mathcal H_L\,\nabla\cdot(\rho_L\mathbf u)\,\mathrm d\mathbf x$ by integration by parts on the periodic domain, and is the case $d=0$ of Theorem~\ref{thm:graph_relative_degree}. Thus, the control appears explicitly in the first derivative of $\mathcal H_L$, the relative degree is one, and $\mathbf u\in\mathcal K_L$ is equivalent to $\dot{\mathcal H}_L+\alpha_L(\mathcal H_L)\ge0$.
The set invariance of $\mathcal C_L$ follows directly from Theorem~\ref{thm:ho_mf_cbf_invariance} with relative degree $r=1$, since $\mathcal H_L(\rho_L^0)\ge0$.
\qed
\end{pf}

Given a nominal leader control input $\mathbf u$, the safety-adjusted control is computed by solving
\begin{equation}
\begin{aligned}
\mathbf u^\star 
= \arg\min_{\mathbf u' \in \mathcal U} 
\quad& 
\|\mathbf u'-\mathbf u\|_2^2
\\
\text{s.t.}\quad&
\mathbf u'\in \mathcal K_F \cap \mathcal K_L .
\end{aligned}
\label{eq:qp_leader_follower}
\end{equation}
where $\mathcal U$ denotes the set of periodic velocity fields, and $\mathcal{K}_L$ and $\mathcal{K}_F$ are defined in \eqref{eq:leader_safe_set} and \eqref{eq:follower_safe_set}, respectively. 
Since both $\mathcal K_F$ and $\mathcal K_L$ are defined by affine inequalities in $\mathbf u'$, problem~\eqref{eq:qp_leader_follower} is a convex quadratic program whenever the admissible set is nonempty.

As a consequence of Propositions~\ref{prop:follower_safety} and~\ref{prop:leader_safety}, if the quadratic program~\eqref{eq:qp_leader_follower} is feasible for all $t\ge0$ and the conditions \eqref{eq:initial_conditions} and \eqref{eq:initial_condition_leader} are initially satisfied, then the set $\mathcal C_L\cap\mathcal C_F$ is forward invariant.

\subsection{Feasibility of simultaneous leader-follower safety constraints}
\label{subsec:feasibility}
To make explicit the structure of the safety filter, rewrite the follower and leader constraints as
\begin{equation}
\langle \mathbf{a}_F,\mathbf u'\rangle
\ge b_F,
\qquad
\langle \mathbf{a}_L,\mathbf u'\rangle
\ge b_L,
\label{eq:affine_constraints_general}
\end{equation}
where
\[
\begin{aligned}
\mathbf a_F &= \boldsymbol{\Psi}_F,
\qquad
\mathbf a_L = \rho_L\nabla\partial^f_{\rho_L}\mathcal H_L,
\qquad
b_L = -\alpha_L(\mathcal H_L),\\
b_F &= -\left[
I_F+\dot{\alpha}_{F,0}(\mathcal H_F)
+\alpha_{F,1}\left(
\dot{\mathcal H}_F+\alpha_{F,0}(\mathcal H_F)
\right)\right].
\end{aligned}
\]
Whenever $\mathcal K_F\cap\mathcal K_L\neq\emptyset$, the problem \eqref{eq:qp_leader_follower} has the closed-form solution
$\mathbf u^\star=\mathbf u+\mu_F\mathbf a_F+\mu_L\mathbf a_L$, with $\mu_F,\mu_L\geq0$. Specifically, if only the constraint $i\in\{F,L\}$ is active, then $\mu_i=(b_i-\langle\mathbf a_i,\mathbf u\rangle)/\|\mathbf a_i\|_2^2$ and the other multiplier vanishes; if both are active and $\mathbf a_F,\mathbf a_L$ are linearly independent, $(\mu_F,\mu_L)$ solves the $2\times2$ Gram system $\langle\mathbf a_i,\mathbf a_F\rangle\mu_F+\langle\mathbf a_i,\mathbf a_L\rangle\mu_L=b_i-\langle\mathbf a_i,\mathbf u\rangle$, $i\in\{F,L\}$.

The admissible set $\mathcal K_F\cap\mathcal K_L$ may be empty when the two requirements conflict, for instance when keeping the followers away from dangerous regions requires moving the leaders toward regions that are unsafe for them. If $\mathbf a_F=-c\,\mathbf a_L$ with $c>0$, the constraints~\eqref{eq:affine_constraints_general} reduce to $b_L\le\langle\mathbf a_L,\mathbf u'\rangle\le-b_F/c$, which are compatible if and only if
\begin{equation}
c \,b_L\le -b_F.
\label{eq:worst_case_feasibility}
\end{equation}
The next result shows that, when $\mathcal U$ imposes no bounds on the control field, this is the only nondegenerate source of infeasibility.

\begin{prop}[Exact feasibility characterization]
\label{prop:exact_feasibility}
Consider the two constraints \eqref{eq:affine_constraints_general} and assume $\mathcal H_L\geq0$, so that $b_L\leq0$.
Then the two constraints are simultaneously feasible for every configuration of $\mathbf a_F$ and $\mathbf a_L$, except in either of the following cases: (i) $\mathbf a_F\equiv0$ and $b_F>0$; (ii) the two constraint directions are antiparallel, i.e.,
    \[
        \mathbf a_F=-c\,\mathbf a_L\neq0,
        \qquad c>0,
    \]
    and their bounds are incompatible, namely
    $
        c b_L>-b_F.
    $
Equivalently,
\[
    \mathcal K_F\cap\mathcal K_L=\emptyset
\]
if and only if either (i) or (ii) holds.
In particular, if $b_F\leq0$, then $\mathcal K_F\cap\mathcal K_L\neq\emptyset$, since $\mathbf u'=0$ satisfies both constraints.
\end{prop}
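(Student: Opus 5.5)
The plan is to work in the Hilbert space $\mathcal L^2(\Omega;\mathbb R^2)$ of periodic velocity fields, where $\mathcal U$ is unconstrained. Each of $\mathcal K_F$ and $\mathcal K_L$ is then a closed half-space, or, if its normal vector vanishes, either the whole space or the empty set. The characterization reduces to a two-dimensional linear-algebra question about $\mathbf a_F,\mathbf a_L$. Since only inner products with $\mathbf a_F$ and $\mathbf a_L$ matter, I will restrict attention to candidate fields $\mathbf u'=\lambda_F\mathbf a_F+\lambda_L\mathbf a_L$. The constraints \eqref{eq:affine_constraints_general} then become a $2\times2$ system governed by the Gram matrix of $\{\mathbf a_F,\mathbf a_L\}$.

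For necessity (``if (i) or (ii), then empty''), I check the two cases directly.
\begin{itemize}
\item In case (i), the follower constraint reads $0\ge b_F>0$, which is impossible for every $\mathbf u'$.
\item In case (ii), as already observed before the statement, the constraints combine into $b_L\le\langle\mathbf a_L,\mathbf u'\rangle\le -b_F/c$. This interval is empty exactly when $cb_L>-b_F$.
\end{itemize}

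For sufficiency (``if neither, then nonempty''), I split on the configuration of the two vectors.
\begin{itemize}
\item If $\mathbf a_L\equiv0$, then $b_L\le0$ (from $\mathcal H_L\ge0$ and $\alpha_L$ being extended class-$\mathcal K$), so $\mathcal K_L$ is the whole space. If additionally $\mathbf a_F\equiv0$, then $b_F\le0$ because (i) fails, and $\mathbf u'=0$ works. Otherwise $\mathbf u'=b_F\mathbf a_F/\|\mathbf a_F\|_2^2$ satisfies the follower constraint with equality.
\item If $\mathbf a_F\equiv0$ and $\mathbf a_L\neq0$, then $b_F\le0$ by the failure of (i), and $\mathbf u'=0$ satisfies both constraints since $b_L\le0$.
\item If both are nonzero and linearly independent, the Gram matrix $G$ is positive definite. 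I choose $\lambda=G^{-1}(b_F,b_L)^\top$, which makes both constraints hold with equality.
\item If both are nonzero and parallel, write $\mathbf a_F=\kappa\mathbf a_L$. For $\kappa>0$, the field $\mathbf u'=t\mathbf a_L$ with $t$ large satisfies both constraints. For $\kappa<0$ we are in the antiparallel setting with $c=-\kappa$; since (ii) fails, $cb_L\le -b_F$, and $\mathbf u'=t\mathbf a_L$ with $t\|\mathbf a_L\|_2^2\in[b_L,-b_F/c]$ is feasible.
\end{itemize}

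The closing claim is immediate. If $b_F\le0$ and $b_L\le0$, then $\mathbf u'=0$ gives $0\ge b_i$ for both $i$.

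The only delicate step is the bookkeeping in the degenerate cases, where one or both vectors vanish or the vectors are parallel. The main point to get right is where the hypothesis $b_L\le0$ is needed: it is what rules out infeasibility when $\mathbf a_L\equiv0$. This explains why no leader-side analogue of (i) appears in the statement. The only other ingredient is the invertibility of the Gram matrix in the independent case, which follows from Cauchy--Schwarz being strict for independent vectors.
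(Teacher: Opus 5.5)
Your proposal is correct and follows essentially the same route as the paper: the same case split into linearly independent directions (handled by the positive-definite Gram matrix) and dependent ones ($\mathbf a_L\equiv0$, $\mathbf a_F\equiv0$, parallel, antiparallel), with $b_L\le0$ disposing of the $\mathbf a_L\equiv0$ case. The differences are presentational: you separate necessity from sufficiency and give explicit feasible fields in each degenerate case, where the paper settles the equivalence case by case more tersely.
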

\begin{pf}
Suppose first that $\mathbf a_F$ and $\mathbf a_L$ are linearly independent. Their Gram matrix is then positive definite and hence invertible, so there exists $\mathbf u'\in\operatorname{span}\{\mathbf a_F,\mathbf a_L\}$ with $\langle\mathbf a_F,\mathbf u'\rangle=b_F$ and $\langle\mathbf a_L,\mathbf u'\rangle=b_L$. 

Assume now that $\mathbf a_F$ and $\mathbf a_L$ are linearly dependent. 

If $\mathbf a_L\equiv0$, the leader constraint is always satisfied since $b_L\leq0$. The follower constraint is feasible whenever $\mathbf a_F\not\equiv0$, whereas, if $\mathbf a_F\equiv0$, it reduces to $0\geq b_F$ and is infeasible if and only if $b_F>0$, yielding case~(i).

Now suppose that $\mathbf a_L\not\equiv0$. If $\mathbf a_F\equiv0$, the follower constraint is infeasible if and only if $b_F>0$, again yielding case~(i). Otherwise, since $\mathbf a_F$ and $\mathbf a_L$ are linearly dependent, there exists $c>0$ such that $\mathbf a_F=c\,\mathbf a_L$ or $\mathbf a_F=-c\,\mathbf a_L$.

If $\mathbf a_F=c\,\mathbf a_L$, both constraints impose lower bounds along the same direction and are therefore simultaneously feasible. If $\mathbf a_F=-c\,\mathbf a_L$, \eqref{eq:worst_case_feasibility} shows that the constraints are infeasible if and only if $cb_L>-b_F$, which yields case~(ii). \qed
\end{pf}

Proposition~\ref{prop:exact_feasibility} shows that, in the absence of bounds on the control field, apart from the degenerate case $\mathbf a_F\equiv0$, infeasibility emerges in the case of antiparallel alignment of the two constraint directions. Nevertheless, near-antiparallel configurations may require large corrections of the nominal control even when $\mathcal K_F\cap\mathcal K_L\neq\emptyset$.

Inspired by \citep{xiao2021adaptive,parwana2025rate}, we recover feasibility in the antiparallel case by adapting $\alpha_{F,0}$.
Let $\alpha_{F,0}(t,s)=\gamma_0(t)s$, $\alpha_{F,1}(s)=\gamma_1s$, and $\alpha_L(s)=\gamma_Ls$, where $\gamma_1,\gamma_L>0$ are constant and $\dot\gamma_0=\nu_0$. Since $\psi_{F,1}=\dot{\mathcal H}_F+\gamma_0\mathcal H_F$, its derivative contains the additional term $\nu_0\mathcal H_F$, yielding the feasibility condition
\begin{equation}
B+\nu_0\mathcal H_F\geq0,
\label{eq:feasibility_condition}
\end{equation}
where $B:=I_F+(\gamma_0+\gamma_1)\dot{\mathcal H}_F+\gamma_0\gamma_1\mathcal H_F+c\gamma_L\mathcal H_L$.

To preserve $\gamma_0(t)\geq\gamma_0(0)>0$, we impose $\nu_0\geq-\beta(\gamma_0-\gamma_0(0))$, where $\beta$ is a locally Lipschitz extended class-$\mathcal K$ function, and minimize $\nu_0$ pointwise subject to both constraints. This choice keeps the gain at its initial value whenever feasible and allows it to decrease toward that value after an increase, whenever the feasibility constraint permits.
\begin{prop}[Adaptive feasibility]
\label{prop:adaptive_feasibility}
Assume $\mathcal H_F>0$,
$\mathbf a_F=-c\mathbf a_L\neq\mathbf 0$ for some $c>0$,
and $\gamma_0(0)>0$.
The smallest admissible update is
\begin{equation}
\nu_0^\star=\max\left\{
-\frac{B}{\mathcal H_F},
-\beta(\gamma_0-\gamma_0(0))
\right\}.
\label{eq:opt_feasibility}
\end{equation}
This update renders the safety-filter QP~\eqref{eq:qp_leader_follower} feasible and preserves $\gamma_0(t)\geq\gamma_0(0)>0$ along the solution.
\end{prop}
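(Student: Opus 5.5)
Our plan is to reduce the statement to the antiparallel compatibility condition~\eqref{eq:worst_case_feasibility} of Proposition~\ref{prop:exact_feasibility}, written out with the time-varying gain, and then to handle the update of $\gamma_0$ as a scalar comparison problem.

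\emph{Step 1: rewrite $b_F$ with the time-varying gain.} With $\alpha_{F,0}(t,s)=\gamma_0(t)s$, the total derivative $\dot\alpha_{F,0}(\mathcal H_F)$ in $\mathcal K_F$ equals $\nu_0\mathcal H_F+\gamma_0\dot{\mathcal H}_F$. Hence
\[
b_F=-\big[I_F+(\gamma_0+\gamma_1)\dot{\mathcal H}_F+\gamma_0\gamma_1\mathcal H_F+\nu_0\mathcal H_F\big],
\qquad b_L=-\gamma_L\mathcal H_L .
\]
Since $\mathbf a_F=-c\,\mathbf a_L\neq0$, Proposition~\ref{prop:exact_feasibility} shows that case~(i) is excluded, so $\mathcal K_F\cap\mathcal K_L\neq\emptyset$ if and only if $c\,b_L\le -b_F$. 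Substituting the expressions above, this is exactly $B+\nu_0\mathcal H_F\ge0$, i.e.\ condition~\eqref{eq:feasibility_condition}. Because $\mathcal H_F>0$, it is equivalent to $\nu_0\ge -B/\mathcal H_F$.

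\emph{Step 2: the minimal update.} The admissible $\nu_0$ must satisfy two lower bounds: $\nu_0\ge -B/\mathcal H_F$ (feasibility) and $\nu_0\ge-\beta(\gamma_0-\gamma_0(0))$ (gain preservation). The feasible set for $\nu_0$ is therefore $[\max\{\cdot,\cdot\},\infty)$, which is nonempty, and its pointwise minimizer is $\nu_0^\star$ in~\eqref{eq:opt_feasibility}. Choosing $\nu_0=\nu_0^\star$ satisfies the first bound, so by Step~1 the QP~\eqref{eq:qp_leader_follower} is feasible at every time at which the hypotheses hold.

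\emph{Step 3: preservation of $\gamma_0\ge\gamma_0(0)$.} Let $e:=\gamma_0-\gamma_0(0)$, so $e(0)=0$. The update gives $\dot e=\nu_0^\star\ge-\beta(e)$. Since $\beta$ is locally Lipschitz and extended class-$\mathcal K$, the comparison lemma~\citep{khalil2002nonlinear}, applied with the solution $e\equiv0$ of $\dot z=-\beta(z)$, yields $e(t)\ge0$. This is the same argument used for $\psi_{r-1}$ in Theorem~\ref{thm:ho_mf_cbf_invariance}, and it gives $\gamma_0(t)\ge\gamma_0(0)>0$.

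\emph{Main obstacle.} The delicate step is making sure the time-varying gain is accounted for consistently. The invariance argument of Proposition~\ref{prop:follower_safety} still holds with $\psi_{F,1}=\dot{\mathcal H}_F+\gamma_0\mathcal H_F$, because the comparison lemma only needs $\gamma_0(t)\,s$ to be extended class-$\mathcal K$ in $s$ at each $t$, which follows from $\gamma_0>0$. We also need $\nu_0^\star$ to be regular enough, for instance locally integrable, for $\gamma_0$ to be absolutely continuous. We would argue this from the continuity of $B$ and $\mathcal H_F$ along the solution while $\mathcal H_F>0$.
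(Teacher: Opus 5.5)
Your proposal is correct and follows the paper's proof. In the antiparallel case you reduce QP feasibility to the scalar condition $B+\nu_0\mathcal H_F\ge0$ via Proposition~\ref{prop:exact_feasibility}, take the smallest $\nu_0$ meeting both lower bounds, and apply the comparison lemma to $\gamma_0-\gamma_0(0)$. The only difference is that you derive $B+\nu_0\mathcal H_F\ge0$ explicitly from $c\,b_L\le -b_F$ with the time-varying $b_F$, which the paper states just before the proposition rather than inside its proof.
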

\begin{pf}
Since $\mathcal H_F>0$, condition
\eqref{eq:feasibility_condition} and the barrier constraint give $\nu_0\geq\max\{-B/\mathcal H_F,-\beta(\gamma_0-\gamma_0(0))\}$.
This interval is nonempty, and its smallest element is \eqref{eq:opt_feasibility}. Feasibility of the safety-filter QP follows from Proposition~\ref{prop:exact_feasibility}. Finally, $\dot\gamma_0=\nu_0^\star \geq-\beta(\gamma_0-\gamma_0(0))$ and the comparison lemma guarantee $\gamma_0(t)\geq\gamma_0(0)>0$ along the solution.
\qed
\end{pf}

When $\mathcal H_F=0$, the adaptive term vanishes and feasibility requires $B\geq0$. The forward invariance argument of Theorem~\ref{thm:ho_mf_cbf_invariance} extends to $\alpha_{F,0}(t,s)=\gamma_0(t)s$ by the same comparison argument.
 
\begin{figure*}[htb]
\centering
\subfloat[]{\includegraphics[width=0.19\textwidth]{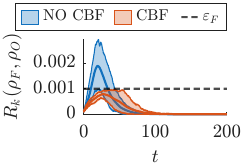}\label{fig:temporal_evolution}}\hfill
\subfloat[]{\includegraphics[width=0.19\textwidth]{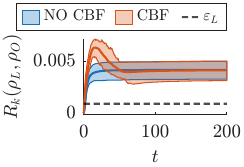}\label{fig:total}}\hfill
\subfloat[]{\includegraphics[width=0.19\textwidth]{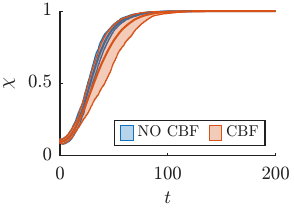}\label{fig: chi}}\hfill
\subfloat[]{\includegraphics[width=0.14\textwidth]{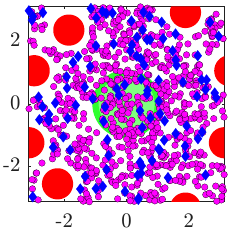}\label{fig:start}}\hfill
\subfloat[]{\includegraphics[width=0.14\textwidth]{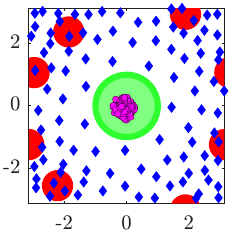}\label{fig:end}}
\caption{Follower safety only: constrained (orange) vs.\ unconstrained (blue) case over 50 simulations. Kernel-weighted overlap of (a) followers and (b) leaders with the dangerous regions; (c) fraction $\chi$ of followers within the goal region. Representative example with the second-order filter at (d) $t=0$ and (e) $t=T$: leaders (blue diamonds), followers (magenta dots), goal region (green disk).}
\end{figure*}
\begin{figure*}[htb]
\centering
\subfloat[]{\includegraphics[width=0.19\textwidth]{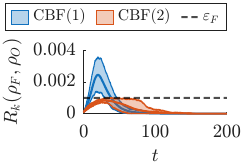}\label{fig:temporal_evolution2}}\hfill
\subfloat[]{\includegraphics[width=0.19\textwidth]{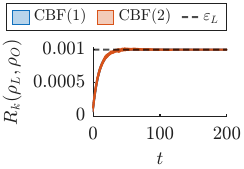}\label{fig:total2}}\hfill
\subfloat[]{\includegraphics[width=0.19\textwidth]{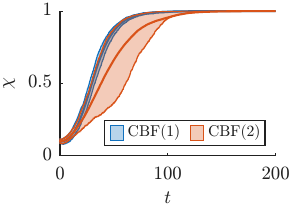}\label{fig:chi2}}\hfill
\subfloat[]{\includegraphics[width=0.15\textwidth]{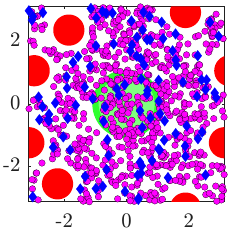}\label{fig:start2}}\hfill
\subfloat[]{\includegraphics[width=0.15\textwidth]{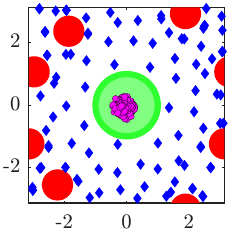}\label{fig:end2}}
\caption{Leader and follower safety: first-order leader-only filter CBF(1) (blue) vs.\ proposed second-order filter CBF(2) (orange) over 50 simulations. Kernel-weighted overlap of (a) followers and (b) leaders with the dangerous regions; (c) fraction $\chi$ of followers within the goal region. Representative example with CBF(2) at (d) $t=0$ and (e) $t=T$, with all the followers within the goal region.}
\end{figure*}

\subsection{Tracking error under safety-filtered leader dynamics}
In the absence of dangerous regions, the nominal control $\mathbf u$ steers $\rho_L$ toward the target $\bar\rho_L$, and the induced interaction field $\bar{\mathbf v}_F:=\mathbf g*\bar\rho_L$ steers $\rho_F$ toward $\bar\rho_F$. Under the filtered control~\eqref{eq:qp_leader_follower}, the leader density may deviate from $\bar\rho_L$, which in turn modifies the transport field acting on the followers, so that exact convergence of $\rho_F$ to $\bar\rho_F$ cannot be guaranteed. The next result quantifies this effect.

\begin{prop}[Tracking error under the safety filter]
\label{prop:tracking}
Consider the leader-follower model~\eqref{eq:LF_model}
with the nominal mass-preserving control $\mathbf u$
of~\citep{maffettone2025leader-follower}.
Let $\mathbf u^\star$ be generated
by~\eqref{eq:qp_leader_follower},
$\delta\mathbf u:=\mathbf u^\star-\mathbf u$,
and let $\rho_L^\star$, $\rho_F$ denote the resulting densities.
Define $e_L:=\bar\rho_L-\rho_L^\star$,
$e_F:=\bar\rho_F-\rho_F$, and let
\[
    V_L:=\tfrac12\|e_L\|_2^2,
    \qquad
    V_F:=\tfrac12\int_\Omega
    \frac{e_F^2}{\bar\rho_F}\,\mathrm d\mathbf x.
\]
Assume that:
(i) $\bar\rho_L$ is an equilibrium of the nominal
closed-loop leader dynamics and there exists $\lambda_L>0$
such that
\[\langle\bar\rho_L-\rho_L,
\nabla\cdot(\rho_L\mathbf u)\rangle
\leq-\frac{\lambda_L}{2}\|\bar\rho_L-\rho_L\|_2^2\]
for every sufficiently regular admissible density $\rho_L$,
with $\mathbf u$ evaluated at $\rho_L$;
(ii) the target follower density is smooth, periodic,
satisfies $0<\underline{q}_F\leq\bar\rho_F\leq \overline q_F$, and obeys
\[-\nabla\cdot(\bar\rho_F\bar{\mathbf v}_F)
+D\Delta\bar\rho_F=0,\] where
$\bar{\mathbf v}_F=\mathbf g*\bar\rho_L$.
Let $\lambda_F>0$ denote a decay rate such that
$\dot V_F\leq-\lambda_FV_F$ along the nominal follower
dynamics, as provided by the weighted Poincaré inequality;
(iii) $\sup_{t\geq0}\|\rho_F(t,\cdot)\|_{W^{1,\infty}}
\leq C_\rho$, and
$\mathbf g,\nabla\cdot\mathbf g\in\mathcal L^1(\Omega)$.

If $\limsup_{t\to\infty}\|\nabla\cdot(\rho_L^\star\delta\mathbf u)\|_2\leq\eta_u$, then
\begin{subequations}
\begin{align}
    \limsup_{t\to\infty}\|e_L\|_2
    &\leq\frac{2\eta_u}{\lambda_L},
    \label{eq: tracking_error_leader}\\
    \limsup_{t\to\infty}
    \left\|\frac{e_F}{\sqrt{\bar\rho_F}}\right\|_2
    &\leq\frac{2\tilde K_\rho}{\lambda_F}
    \limsup_{t\to\infty}\|e_L\|_2
    \leq\frac{4\tilde K_\rho}{\lambda_L\lambda_F}\eta_u.
    \label{eq: tracking_error_follower}
\end{align}
\end{subequations}
where
$\tilde K_\rho:=
C_\rho(\|\mathbf g\|_1+\|\nabla\cdot\mathbf g\|_1)/\sqrt{\underline{q}_F}$.
In particular, if $\|\nabla\cdot(\rho_L^\star\delta\mathbf u)\|_2\to0$, both densities converge to their targets in $\mathcal L^2(\Omega)$.
\end{prop}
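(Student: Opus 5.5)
We will run a cascade Lyapunov argument: first obtain an input-to-state bound for the leader error driven by the filter correction, then treat the leader error as a disturbance acting on the follower error.

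\emph{Leader error.} Under the filtered control the leader density obeys
\[
\partial_t\rho_L^\star=-\nabla\cdot(\rho_L^\star\mathbf u)-\nabla\cdot(\rho_L^\star\delta\mathbf u),
\]
with $\mathbf u$ the nominal field evaluated at $\rho_L^\star$. Since $\bar\rho_L$ is time-invariant, $\dot e_L=-\partial_t\rho_L^\star$. Hence
\[
\dot V_L=\langle e_L,\nabla\cdot(\rho_L^\star\mathbf u)\rangle+\langle e_L,\nabla\cdot(\rho_L^\star\delta\mathbf u)\rangle.
\]
Assumption (i) bounds the first term by $-\lambda_L V_L$. Cauchy--Schwarz bounds the second by $\|e_L\|_2\|\nabla\cdot(\rho_L^\star\delta\mathbf u)\|_2$. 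Passing to $W_L:=\|e_L\|_2=\sqrt{2V_L}$, where $W_L>0$ (otherwise one works with $\sqrt{V_L+\epsilon}$ and lets $\epsilon\to0$), this gives
\[
\dot W_L\le-\tfrac{\lambda_L}{2}W_L+\|\nabla\cdot(\rho_L^\star\delta\mathbf u)\|_2.
\]
The comparison lemma, together with a standard limsup argument for linear scalar inequalities with eventually bounded input, then yields $\limsup\|e_L\|_2\le 2\eta_u/\lambda_L$, which is \eqref{eq: tracking_error_leader}.

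\emph{Follower error.} Subtracting the stationary equation in (ii) from \eqref{eq: follower_evolution} gives
\[
\partial_t e_F=-\nabla\cdot(e_F\bar{\mathbf v}_F)+D\Delta e_F-\nabla\cdot\big(\rho_F(\bar{\mathbf v}_F-\mathbf v_F)\big).
\]
Here $\bar{\mathbf v}_F-\mathbf v_F=\mathbf g*e_L$, with $\mathbf v_F=\mathbf g*\rho_L^\star$. The first two terms constitute the nominal follower error dynamics (linear in $e_F$, with frozen field $\bar{\mathbf v}_F$). Their contribution to $\dot V_F$ is therefore bounded by $-\lambda_F V_F$, by the decay rate in (ii). The perturbation contributes
\[
-\int_\Omega\frac{e_F}{\bar\rho_F}\nabla\cdot\big(\rho_F\,\mathbf g*e_L\big)\,\mathrm d\mathbf x.
\]
We keep the divergence on the source rather than integrating by parts; that is why $\nabla\cdot\mathbf g$ appears in $\tilde K_\rho$. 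We write $e_F/\bar\rho_F=(e_F/\sqrt{\bar\rho_F})\cdot(1/\sqrt{\bar\rho_F})$ and use $\bar\rho_F\ge\underline q_F$, so the integral is at most $\|e_F/\sqrt{\bar\rho_F}\|_2\,\|\nabla\cdot(\rho_F\,\mathbf g*e_L)\|_2/\sqrt{\underline q_F}$. The product rule gives
\[
\nabla\cdot(\rho_F\,\mathbf g*e_L)=\nabla\rho_F\cdot(\mathbf g*e_L)+\rho_F\,(\nabla\cdot\mathbf g)*e_L.
\]
Combining this with (iii) and Young's inequality $\|f*e_L\|_2\le\|f\|_1\|e_L\|_2$, the perturbation is at most
\[
\|e_F/\sqrt{\bar\rho_F}\|_2\,C_\rho(\|\mathbf g\|_1+\|\nabla\cdot\mathbf g\|_1)\,\|e_L\|_2/\sqrt{\underline q_F}=\tilde K_\rho\,\|e_F/\sqrt{\bar\rho_F}\|_2\,\|e_L\|_2.
\]

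\emph{Conclusion and main obstacle.} Setting $W_F:=\|e_F/\sqrt{\bar\rho_F}\|_2=\sqrt{2V_F}$, the same square-root step gives
\[
\dot W_F\le-\tfrac{\lambda_F}{2}W_F+\tilde K_\rho\|e_L\|_2.
\]
The limsup comparison argument then gives the first inequality in \eqref{eq: tracking_error_follower}, and substituting the leader bound gives the second. If $\|\nabla\cdot(\rho_L^\star\delta\mathbf u)\|_2\to0$, we take $\eta_u=0$ and both errors vanish, giving $\mathcal L^2$ convergence, using $\bar\rho_F\le\overline q_F$ for the follower. The main obstacle is the follower step. It must be justified that the decay estimate for the nominal follower dynamics applies to the linear error part $-\nabla\cdot(e_F\bar{\mathbf v}_F)+D\Delta e_F$ with the frozen field $\bar{\mathbf v}_F$. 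This requires that $e_F$ has zero mass (true by mass conservation) and that the weighted Poincaré inequality holds for the $1/\bar\rho_F$-weighted norm. We read the hypothesis in (ii) as exactly this statement and invoke it directly. A further point is keeping the cross term controlled using only the $W^{1,\infty}$ bound on $\rho_F$ from (iii).
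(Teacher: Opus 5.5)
Your proposal is correct and follows the paper's proof essentially step by step. Both use the same input-to-state estimate for $V_L$ (assumption (i), Cauchy--Schwarz, comparison on a square-root Lyapunov function), and the same weighted Cauchy--Schwarz, product-rule and Young bound for the follower perturbation leading to $\tilde K_\rho$. The point you flag as the main obstacle is not simply read into hypothesis (ii) in the paper; it is obtained by direct computation: the linear error flux equals $D\bar\rho_F\nabla(e_F/\bar\rho_F)-(e_F/\bar\rho_F)(\bar\rho_F\bar{\mathbf v}_F-D\nabla\bar\rho_F)$ with divergence-free stationary part. This gives the state-wise identity $\dot V_F=-D\int_\Omega\bar\rho_F\|\nabla(e_F/\bar\rho_F)\|^2\,\mathrm d\mathbf x$, and weighted Poincaré with $\int_\Omega e_F\,\mathrm d\mathbf x=0$ then yields the decay you invoke.
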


\begin{pf}
\emph{Leader.}
Since $\mathbf u^\star=\mathbf u+\delta\mathbf u$,
\[
    \partial_t e_L
    =\nabla\cdot(\rho_L^\star\mathbf u)
    +\nabla\cdot(\rho_L^\star\delta\mathbf u).
\]
By (i) and the Cauchy-Schwarz inequality,
\[
    \dot V_L
    \leq-\lambda_LV_L+\sqrt{2V_L}\,
    \|\nabla\cdot(\rho_L^\star\delta\mathbf u)\|_2.
\]
Applying the comparison lemma to $W_L:=\sqrt{V_L}$, with a standard regularization at $W_L=0$, proves~\eqref{eq: tracking_error_leader}.

\emph{Follower.}
Let $\delta\mathbf v_F:=
\mathbf g*\rho_L^\star-\bar{\mathbf v}_F=-\mathbf g*e_L$.
Using (ii), the error dynamics becomes
\[
    \partial_t e_F
    =-\nabla\cdot(e_F\bar{\mathbf v}_F)
    +D\Delta e_F+\nabla\cdot(\rho_F\delta\mathbf v_F).
\]
For the nominal dynamics, stationarity and periodic integration by parts give
\[
    \dot V_F
    =-D\int_\Omega\bar\rho_F
    \left\|\nabla\left(\frac{e_F}{\bar\rho_F}\right)\right\|^2
    \,\mathrm d\mathbf x
    \leq-\lambda_FV_F,
\]
where the equality follows from periodicity and the divergence-free stationary flux $\bar\rho_F\bar{\mathbf v}_F-D\nabla\bar\rho_F$, whereas the inequality follows from weighted Poincaré and $\int_\Omega e_F\,\mathrm d\mathbf x=0$.

Along the perturbed dynamics, weighted Cauchy-Schwarz therefore yields
\[
    \dot V_F
    \leq-\lambda_FV_F+
    \sqrt{2V_F}\,
    \left\|
    \frac{\nabla\cdot(\rho_F\delta\mathbf v_F)}
    {\sqrt{\bar\rho_F}}
    \right\|_2.
\]
By (iii), Young's convolution inequality $\|f*g\|_2\le\|f\|_1\|g\|_2$~\citep{bogachev2007measure}, and $\bar\rho_F\geq \underline{q}_F$,
\[
\begin{aligned}
    \left\|
    \frac{\nabla\cdot(\rho_F\delta\mathbf v_F)}
    {\sqrt{\bar\rho_F}}
    \right\|_2
    &\leq\frac{C_\rho}{\sqrt{\underline{q}_F}}
    \bigl(\|\delta\mathbf v_F\|_2
    +\|\nabla\cdot\delta\mathbf v_F\|_2\bigr)\\
    &\leq\tilde K_\rho\|e_L\|_2.
\end{aligned}
\]
Applying the same comparison argument to $W_F:=\sqrt{V_F}$ and using~\eqref{eq: tracking_error_leader} proves~\eqref{eq: tracking_error_follower}. If the perturbation vanishes, both errors converge to zero; follower convergence in $\mathcal L^2(\Omega)$ follows from $\|e_F\|_2\leq\sqrt{\overline q_F} \|e_F/\sqrt{\bar\rho_F}\|_2$.
\qed
\end{pf}

\section{Numerical simulations}
\label{sec:numerical_simulations}
We consider two scenarios, both implemented through a safety filter acting on the leader control: one in which only follower safety is enforced, and a more restrictive one in which both leader and follower safety are enforced. In both, the desired follower density is the von Mises profile
\[
    \bar{\rho}_F(\mathbf{x})
    \propto \exp\big(
        k_1\cos(x_1)
        +k_2\cos(x_2)
    \big),
\]
normalized so that $\int_\Omega\bar\rho_F\,\mathrm d\mathbf x=M_F$, with $k_1=k_2=9$ chosen to confine a fraction $0.999$ of followers within a goal region of radius $1$ centered at the origin. We consider $N_L=100$ leaders and $N_F=720$ followers, with $D=0.001$, in $\Omega=[-\pi,\pi]^2$.

The dangerous regions are the union of five disks of radius $0.5$, whose centers are at least $0.75$ from the boundary of the goal region, with a minimum distance of $0.25$ between the boundaries of any two disks.
The obstacle density $\rho_O$ is the indicator function of this union, normalized to unit mass. Both populations are uniformly initialized in $\Omega$, excluding a ball of radius 0.75 around each disk center. The safety functionals~\eqref{eq:barrier_functional} use $\varepsilon_F=\varepsilon_L=0.001$.

The interaction kernel parameters are $L_r=\pi$ and $\eta=10^{-8}$, the Gaussian safety kernel width is $\sigma=0.15$, and the gains of the nominal controller are taken from~\citep{maffettone2025leader-follower}.
Both periodic kernels and the required derivatives are evaluated using symmetrically truncated sums over periodic images. We choose $\alpha_{F,0}(t,s)=\gamma_0(t)s$,
$\alpha_{F,1}(s)=\gamma_1s$, and
$\alpha_L(s)=\gamma_Ls$, with
$\gamma_0(0)=\gamma_1=\gamma_L=0.1$.

The density dynamics are approximated by particles,
whose positions are propagated through the
Euler-Maruyama scheme with time step $\Delta t=0.01$
up to the final time $T=200$. Each density is represented by the empirical measure
\[
    \hat{\rho}_i(t,\mathbf{x}) = \frac{1}{N_L+N_F}\sum_{j=1}^{N_i}\delta\!\left(\mathbf{x}-\mathbf{x}_j(t)\right),
\]
where $\delta$ denotes the periodic Dirac delta on $\Omega$, so that $\int_\Omega \hat{\rho}_i\,\mathrm d\mathbf{x}=M_i$ for $i \in \{L,F \}$. The safety constraints are imposed in sampled-data form: at each sampling instant $t_k=k\Delta t$, the terms of the HO-MF-CBF conditions are evaluated from the current empirical densities, the QP~\eqref{eq:qp_leader_follower} is solved at the mean-field level, and the resulting velocity field $\mathbf u^\star$ is then evaluated at the leader positions and held over the sampling interval. The initial conditions~\eqref{eq:initial_conditions} and~\eqref{eq:initial_condition_leader} are satisfied in all simulations. 

In all simulations, the QP remained feasible, consistently with Proposition~\ref{prop:exact_feasibility}; hence, the adaptive
law of Proposition~\ref{prop:adaptive_feasibility} yielded $\nu_0^\star=0$ throughout, leaving $\gamma_0$ unchanged.
For each scenario, we run 50 independent simulations and monitor the kernel-weighted overlaps $R_k(\rho_i,\rho_O)$ and the fraction $\chi$ of followers within the goal region.

\emph{Follower safety.}
We first enforce only follower safety and compare with 50 simulations without any CBF. The second-order filter keeps $R_k(\rho_F,\rho_O)$ below $\varepsilon_F$ throughout the evolution, whereas the unconstrained controller exceeds the threshold during the transient (Fig.~\ref{fig:temporal_evolution}). To achieve this, the leaders approach the dangerous regions to generate the repulsive field that pushes the followers away, resulting in a larger $R_k(\rho_L,\rho_O)$ than in the unconstrained case during the transient. After the transient, the leader overlaps become nearly identical; since leader safety is not enforced, both remain above $\varepsilon_L$ for almost the entire simulation (Fig.~\ref{fig:total}). The filter slightly delays goal reaching (Fig.~\ref{fig: chi}), while the final value of $\langle\chi(T)\rangle$ is equal to $1$ both with and without CBF. In the representative example of Figs.~\ref{fig:start}-\ref{fig:end}, all followers reach the goal region.

\emph{Leader and follower safety.}
We then enforce both constraints and compare the proposed filter, CBF(2), with a first-order filter, CBF(1), enforcing leader safety only, to assess whether leader safety alone indirectly preserves follower safety. It does not: under CBF(1), $R_k(\rho_F,\rho_O)$ exceeds $\varepsilon_F$ during the transient, whereas under CBF(2) it remains below the threshold throughout (Fig.~\ref{fig:temporal_evolution2}), demonstrating the role of the higher-order follower constraint in ensuring safety under indirect actuation. Under both filters, $R_k(\rho_L,\rho_O)$ rapidly approaches $\varepsilon_L$ and then remains close to it, with small numerical oscillations around the boundary (Fig.~\ref{fig:total2}). Thus, comparable leader overlap is accompanied by different follower safety outcomes. The final value of $\chi$ is equal to $1$ under both filters (Fig.~\ref{fig:chi2}), although CBF(1) reaches $\chi\geq0.99$ earlier than CBF(2). In the representative example of Figs.~\ref{fig:start2}-\ref{fig:end2}, all followers reach the goal region, while the leader overlap stays within $\varepsilon_L$, in contrast with Fig.~\ref{fig:end}.

\section{Conclusion}
\label{sec:conclusions}
In this work, we developed a Higher-Order Mean-Field Control Barrier Function framework for interconnected density systems with indirect actuation. We characterized the relative degree of a safety functional in terms of the directed interaction graph among populations: if the constrained population lies at graph distance $d$ from the directly actuated one, the control input enters the $(d+1)$-th time derivative of the safety functional, provided that the corresponding control coefficient is nonzero. Importantly, the control-dependent part of this derivative remains affine in the control velocity field. As a result, the associated higher-order mean-field safety condition can still be enforced through a convex quadratic program.

We applied our framework to a leader-follower density control problem, where the follower population is not directly actuated and its safety must be enforced through the controlled leader density. In this setting, we showed that the follower safety functional has relative degree two with respect to the leader control field, whereas the leader safety functional has relative degree one. This naturally leads to a safety filter defined by affine constraints in the leader control input.

We also discussed feasibility issues that arise when safety constraints are imposed simultaneously on leader and follower populations. In particular, we showed that, without control bounds and apart from the degenerate case of a vanishing follower control coefficient, the two barrier conditions can conflict only when their constraint directions are antiparallel. Numerical simulations in cluttered environments illustrated that the proposed HO-MF-CBF filter can preserve follower safety during the transient phase, while highlighting the safety-performance trade-off.

Future work will address feasibility under bounded control authority, robust safety in the presence of uncertain agent dynamics or disturbances, and the derivation of finite-agent safety guarantees from mean-field barrier conditions.

\section*{Declaration of generative AI and AI-assisted technologies in the writing process}
During the preparation of this work the authors used a generative AI assistant in order to improve the grammar and readability of the manuscript. After using this tool, the authors reviewed and edited the content as needed and take full responsibility for the content of the published article.

\bibliographystyle{plainnat}
\bibliography{autosam}

\end{document}